\theoremstyle{plain}
\newtheorem{theorem}{Theorem}[section]
\newtheorem{lemma}[theorem]{Lemma}
\newtheorem{proposition}[theorem]{Proposition}
\newtheorem*{theorem*}{Theorem}
\newtheorem*{claim*}{Claim}
\theoremstyle{definition}
\numberwithin{equation}{section}
\begin{document}
% -------------------------------------------------------------------%

\title[Singularity formation]{Singularity formation along the line bundle mean curvature flow}
\author[Y. H. Chan]{Yu Hin Chan}
 %\address{Department of Mathematics, University of California Davis, 1 Shields Ave., Davis, CA, 95616}
 \email{yuhchan@ucdavis.edu}
\author[A. Jacob]{Adam Jacob*}
\thanks{$^{*}$Supported in part by a Simons Collaboration Grant.}

 \email{ajacob@math.ucdavis.edu}

 \address{Department of Mathematics, University of California Davis, 1 Shields Ave., Davis, CA, 95616}
% -------------------------------------------------------------------%

% -------------------------------------------------------------------%

% -------------------------------------------------------------------%

\maketitle

%\vspace{.1in}

\begin{abstract}  The line bundle mean curvature flow is a complex analogue of the mean curvature flow for Lagrangian graphs, with fixed points solving the deformed Hermitian-Yang-Mills equation.  In this paper we construct two distinct examples of singularities along the flow. First, we find a finite time singularity, ruling out long time existence of the flow in general. Next we show long time existence of the flow with a Calabi symmetry assumption on the blowup of $\mathbb P^n$, $n\geq 3$, if one assumes supercritical phase. Using this, we find an example where a singularity occurs at infinite time along the destabilizing subvariety in the semi-stable case.

 \end{abstract}

\section{Introduction}
Let $(X,\omega)$ be a compact K\"ahler manifold, and $[\alpha]\in H^{1,1}(X,\mathbb R)$ a real cohomology class.  The {\it deformed Hermitian-Yang-Mills} (dHYM) equation  seeks a representative $\alpha\in[\alpha]$ satisfying
\begin{equation}
\label{dHYM1}
{\rm Im}(e^{- i\hat\theta}(\omega+i\alpha)^n)=0
\end{equation}
for a fixed constant $e^{i\hat\theta}\in S^1$. Recently this equation has garnered significant attention, and extensive work has   centered around the relationship between existence of a solution and notions of geometric stability \cite{C1, CJY, CLSY, CS, CLT, JS, Ping}. Although much of this work has been done with elliptic methods, substantial progress has been made following a parabolic approach as well \cite{FYZ,HJ, JY, Tak2, Tak1}.

In this paper we focus on one such parabolic method, known as the line bundle mean curvature flow.  Fix a background form $\alpha_0\in[\alpha]$, and define $\alpha_t:=\alpha_0+i\partial\bar\partial\phi_t$.   At any point $p\in X$ one can choose coordinates so  $\omega^{-1}\alpha_t$ is diagonal with eigenvalues $\{\lambda_1,...,\lambda_n\}$. The line bundle mean curvature flow can   be expressed as
\begin{equation}
\label{lbmcf}
\dot\phi_t=\sum_k{\rm arctan}(\lambda_k)-\hat\theta,
\end{equation}
where $\hat\theta$ is some choice of a lift of $e^{i\hat\theta}$ to $\mathbb R$. This parabolic flow is the complex analogue of the Lagrangian mean curvature flow in the graphical setting, with the distinction being that the mean curvature flow is given by eigenvalues of the real Hessian of a function, as opposed to the complex Hessian (we direct the reader to \cite{S1,S2,SmW2, TTW, W} for further background on the Lagrangian   case). By the complex formulation of arctan, one sees $\sum_k{\rm arctan}(\lambda_k)$ is   the argument of the top dimensional form $(\omega+i\alpha)^n$, and so solutions to \eqref{dHYM1} are fixed points of \eqref{lbmcf}. We denote this argument  by $\Theta(\alpha_{t})=\sum_k{\rm arctan}(\lambda_k)$.

Developed by the second author and S.-T. Yau, the flow  \eqref{lbmcf} was used to prove existence of a solution to \eqref{dHYM1} under the assumption of hypercritical phase, defined by $\Theta(\alpha_0)>(n-1)\frac{\pi}2$, in addition to the assumption that $(X,\omega)$ has non-negative   orthogonal bisectional curvature \cite{JY}. The phase assumption is useful for two reasons. First, it ensures  convexity of the operator $\Theta(\cdot)$. Second, it allows a natural choice of a lift of $\hat\theta$, which is a priori defined up to a multiple of $2\pi$. In fact, being able to choose such a lift is a major difficulty in the study of \eqref{dHYM1}, and  one would not expect the flow to converge without making the appropriate choice of a lift at the start. 

Given the cohomological obstructions to the existence of solutions to \eqref{dHYM1} from \cite{CJY}, it is evident that the flow   \eqref{lbmcf} can not converge if one choses an initial, unstable class. However, it was previously not known if  the flow exists for all time, or if a finite time singularity could occur. The first goal of our paper is to   construct an explicit example of a finite time singularity, ruling out long time existence. 
\begin{theorem}\label{main}
	Let $X$ be the blowup of $\mathbb P^n$ at a point. There exists a K\"ahler form $\omega$, and cohomology  class $[\alpha]\in H^{1,1}(X,\mathbb R)$  admitting a representative $\alpha_0$, for which the flow   \ref{lbmcf} achieves a finite-time singularity. Specifically, if $\lambda_{Max}(p,t)$ denotes the largest eigenvalue of $\omega^{-1}\alpha_t$ at a point $p\in X$, then there exists a sequence of points $\{x_k\}\subset X$ and times $t_k\rightarrow T<\infty$ such that 
	$$\lim_{k\rightarrow\infty}\lambda_{Max}(x_k, t_k)=\infty.$$
\end{theorem}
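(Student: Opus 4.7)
The natural strategy is to reduce the flow \eqref{lbmcf} to a one-dimensional parabolic PDE via Calabi symmetry on $X=\mathrm{Bl}_p\mathbb P^n$, and then construct explicit initial data for which the reduced scalar equation develops a finite-time singularity. I would parametrize $U(n)$-invariant K\"ahler metrics $\omega$ and $U(n)$-invariant representatives $\alpha$ in the standard Calabi ansatz, so that both are determined by functions of one real variable $\rho$ ranging over an interval $[a,b]$ whose endpoints correspond to the exceptional divisor $E$ and the divisor $H$ at infinity. In this ansatz, the eigenvalues of $\omega^{-1}\alpha_t$ split into a single radial eigenvalue $\lambda_r(\rho,t)$ and a tangential eigenvalue $\lambda_\tau(\rho,t)$ of multiplicity $n-1$; the flow \eqref{lbmcf} becomes a single quasilinear parabolic equation for a scalar potential $u(\rho,t)$, with boundary conditions at $\rho=a,b$ dictated by smoothness across $E$ and $H$. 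Note that since $\arctan$ is bounded, the right-hand side of \eqref{lbmcf} is bounded, so the potential itself cannot blow up; the mechanism must instead force a second-derivative singularity.

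To force such a singularity, I would choose the cohomology classes $[\omega]$ and $[\alpha]$ so that the boundary conditions on the Calabi-reduced problem are incompatible with any steady state of the scalar equation. Concretely, taking $[\omega]=H+cE$ and $[\alpha]=aH+bE$, I would tune $(a,b,c)$ so that near $E$ the phase contribution from the radial direction forces $\lambda_r$ to be driven upward by the flow, while the lift $\hat\theta$ compatible with the phase at infinity cannot accommodate this growth. This is consistent with the paper's second main result: the long-time existence statement there requires a supercritical phase hypothesis, so the natural place to hunt for a finite-time singularity is precisely where that hypothesis fails, and in particular at the exceptional divisor.

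The decisive step is a barrier/comparison argument on the one-dimensional equation. I would introduce the quantity $M(t):=\max_{\rho}\lambda_r(\rho,t)$ (or an equivalent second derivative of $u$) and derive a differential inequality of the form $\tfrac{d}{dt}M(t)\ge F(M(t))$ with $F$ growing fast enough that $\int^{\infty}\!dy/F(y)<\infty$; equivalently, one may construct an explicit local subsolution near $E$ whose second derivative diverges at a finite time $T<\infty$. The maximum principle for the reduced parabolic equation then transfers this blowup to the true solution, producing a sequence $(x_k,t_k)$ with $t_k\to T$ and $\lambda_{\max}(x_k,t_k)\to\infty$, giving Theorem \ref{main}.

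The main obstacle I anticipate is this last step: finite-time blowup is strictly stronger than non-convergence, and one must rule out the scenario in which the eigenvalues drift upward only slowly while $\sum_k\arctan(\lambda_k)$ relaxes to some asymptotic value not equal to $\hat\theta$. The infinite-time singularity in the semi-stable case constructed later in the paper is exactly this slow scenario, so the finite-time example must genuinely separate from it. Achieving this will require exhibiting super-linear feedback between $\lambda_r$ and its time derivative in the Calabi-reduced equation, coming from the geometry of the exceptional divisor; producing such feedback quantitatively, rather than merely as abstract monotonicity, is the technical core of the argument.
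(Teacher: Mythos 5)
Your reduction to a one--variable quasilinear parabolic equation via the Calabi ansatz matches the paper's setup, and your observation that the blowup must be a second--derivative (eigenvalue) singularity rather than a potential blowup is correct. However, the mechanism you propose for forcing the singularity is both unsubstantiated and pointed in the wrong direction. You locate the blowup at the exceptional divisor $E$ and attribute it to class-level incompatibility of the boundary data; in fact the reduced flow \eqref{dHYM flow} has $u''$ vanishing at the endpoints $x=1,a$, so the boundary values of $f$ are frozen, and the paper proves (Proposition \ref{boundary}) that $|f_t'|$ at the endpoints grows at most exponentially --- no finite-time singularity can form there. Moreover the paper emphasizes that its finite-time singularity can be arranged in classes that \emph{do} admit dHYM solutions, so the singularity is caused by a bad choice of initial representative $\alpha_0$, not by instability of $[\alpha]$; your plan to tune the classes so that ``no steady state exists'' is not the right lever.

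The decisive step in your outline --- a differential inequality $\frac{d}{dt}M(t)\ge F(M(t))$ with $\int^\infty dy/F(y)<\infty$ for $M(t)=\max\lambda_r$ --- is precisely the part you do not supply, and it is doubtful it can be supplied: at an interior maximum of $f'$ one has $f'''\le 0$, which yields an \emph{upper} bound on $\dot f'$ (the paper notes the best available estimate is $\dot f'\le Cf'^2$), not the lower bound your ODE comparison requires. The paper's actual argument is of an entirely different, non-pointwise character: it extends \eqref{dHYM flow} to a flow of curves \eqref{mCSF}, builds two explicit families of barriers --- a family of circles far from the origin shrinking to a point in time $R/4$, and a family of hyperbolas (graphs over the $y$-axis) that are subsolutions and translate leftward past the circles' center within a uniformly bounded time --- and chooses $f_0$ threading between them. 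The avoidance principle then traps the evolving graph so that it must develop a vertical tangency (infinite eigenvalue) at an \emph{interior} point of $(1,a)$ before the circles disappear. This geometric trapping is the core of the proof and is absent from your proposal, so as written the argument has a genuine gap at exactly the step you yourself flag as the technical crux.
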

This example is constructed using a particular type of symmetry on $X$, called Calabi Symmetry, which is described in Section \ref{symmetry}. The symmetry allows the dHYM equation to be written as an ODE, and the flow \eqref{lbmcf} is reduced to a parabolic PDE with one spacial variable. Due to similarities with the curve shortening flow, we construct subsolutions which, along with a particular choice of an initial condition,    force a singularity to happen.  Our example can be constructed on classes that admit a solution to the dHYM equation, demonstrating that finite time singularities can not be ruled out by class conditions alone. In fact, we believe similar examples of finite time singularities can be constructed on any pair of classes $[\omega]$ and $[\alpha]$ on the blowup of $\mathbb P^n$. Thus finite time singularities will remain an integral part of the study of \ref{lbmcf}, and will need to be ruled out by choosing appropriate initial conditions.

Our next goal is to demonstrate that the flow can also become singular at infinite time, and to find an example where we can predict exactly where this singularity will occur from the initial classes $[\alpha]$ and $[\omega]$. Using the same Calabi Symmetry setup as above, we first show that if the initial form satisfies supercritical phase then the flow exists for all time:
\begin{theorem}\label{main2}
	Let $(X,\omega)$ be the blowup of $\mathbb P^n$ at a point, $n\geq 3$, and consider a class $[\alpha]\in H^{1,1}(X,\mathbb R)$. Assume $\omega$, $\alpha_0\in[\alpha]$ have Calabi-symmetry, and furthermore assume  $\alpha_0$ has supercritical phase, that is  $\Theta(\alpha_0)>(n-2)\frac\pi 2$. Then the flow \eqref{lbmcf} beginning at $\alpha_0$ exists for all time.
 \end{theorem}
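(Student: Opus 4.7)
Under the Calabi symmetry hypothesis, the potential $\phi_t$ depends on a single radial variable $s$, and the eigenvalues of $\omega^{-1}\alpha_t$ split into a ``radial'' eigenvalue $\lambda$ with multiplicity $1$ and a ``tangential'' eigenvalue $\mu$ with multiplicity $n-1$, each a function of $s$ and $t$ (proportional, respectively, to the second and first $s$-derivatives of $\phi$, up to Calabi-ansatz weights). The flow \eqref{lbmcf} thus reduces to a quasilinear parabolic PDE in one spatial variable for $\phi(s,t)$, and long-time existence amounts to bounding $\lambda$ and $\mu$ on any finite time interval, since standard parabolic regularity in one space variable then bootstraps to $C^\infty$ estimates and extends the flow past any finite time.

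The first step is to show that supercritical phase is preserved. The phase function $\Theta(\alpha_t)$ satisfies an evolution equation of the form $\partial_t\Theta = g_{\alpha_t}^{j\bar k}\partial_j\partial_{\bar k}\Theta$ for the natural (Riemannian) metric $g_{\alpha_t}$ associated to $\alpha_t$, so by the maximum principle $\min_X\Theta(\alpha_t)$ is nondecreasing and the condition $\Theta > (n-2)\tfrac{\pi}{2}$ persists. Under Calabi symmetry this inequality reads
\begin{equation*}
\arctan\lambda + (n-1)\arctan\mu > (n-2)\tfrac{\pi}{2} + \delta
\end{equation*}
for some $\delta > 0$ depending on $\alpha_0$. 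Combining with $\arctan\lambda < \pi/2$ gives $(n-1)\arctan\mu > (n-3)\tfrac{\pi}{2} + \delta$, which for $n \geq 3$ yields a uniform positive lower bound $\mu \geq c > 0$; this is precisely where the dimension hypothesis enters. Any upper bound on $\mu$ then upgrades the inequality to a uniform lower bound on $\lambda$ as well.

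The substantive work is the two upper bounds. For $\mu$, I would differentiate \eqref{lbmcf} once in $s$ to obtain a parabolic equation for $\mu$, note that the cohomology class $[\alpha]$ together with smoothness across the exceptional divisor and at infinity pins down the boundary data at the endpoints of the radial interval, and then apply the one-dimensional maximum principle. The genuine obstacle is the upper bound on $\lambda$: Theorem \ref{main} exhibits initial data for which $\lambda$ blows up in finite time, so any argument must use supercritical phase in an essential way. My plan is to apply the maximum principle to a carefully chosen auxiliary quantity such as $\log(\lambda+C) + A\phi$, or a barrier of the form $\lambda + B\mu^{-p}$, designed so that the dangerous nonlinear terms in its parabolic evolution are absorbed using (i) the uniform lower bound $\mu \geq c$, (ii) the one-variable structure coming from Calabi symmetry, and (iii) the concavity-type properties that the operator $\sum\arctan(\cdot)$ enjoys once one is above the critical level $(n-2)\tfrac{\pi}{2}$.

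Designing this auxiliary quantity and carrying out the maximum-principle argument is the point where I expect the proof to be most delicate: the finite-time singularity of Theorem \ref{main} must be sharply excluded but not overruled, so the calculation has to use the phase hypothesis quantitatively rather than just qualitatively. Once the $C^2$ bounds on $\phi$ are in hand, standard parabolic bootstrapping closes out the proof and yields long-time existence.
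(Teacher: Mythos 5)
Your architecture matches the paper's: reduce to a one-dimensional quasilinear equation via Calabi symmetry, show supercritical phase is preserved by the heat equation for $\Theta$ (the paper uses the metric $\eta_{\bar kj}=g_{\bar kj}+\alpha_{\bar kp}g^{\bar qp}\alpha_{\bar qj}$, as in \cite{JY}), deduce from $n\geq 3$ a positive lower bound on the tangential eigenvalue $\mu=f/x$ and a lower bound on $\lambda=f'$, and isolate the upper bound on $\lambda$ as the crux. Your derivation of $\mu\geq c>0$ is essentially the paper's lemma that $f_t>0$, and your remark that this is where $n\geq 3$ enters is exactly right.

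However, the decisive step is left as a plan rather than a proof, and that is a genuine gap. You propose auxiliary quantities ($\log(\lambda+C)+A\phi$, or $\lambda+B\mu^{-p}$) without verifying that any of them works; the paper instead evaluates $\dot f'$ directly at an interior spatial maximum of $e^{-Bt}f'$ (where $f''=0$, $f'''\leq 0$) and observes that the only quadratic-in-$f'$ term surviving in $\frac{d}{dx}\bigl((n-1)u''\frac{xf'-f}{x^2+f^2}\bigr)$ is $-2x_0f(f')^2/(x_0^2+f^2)^2$, which has a \emph{favorable} sign precisely because $f>0$. This yields $\dot f'\leq Cf'+C$ and hence only exponential growth; without positivity of $f$ one gets $\dot f'\leq C(f')^2$, which is consistent with the finite-time blowup of Theorem \ref{main}. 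So the mechanism by which ingredient (i) of your list is used is very specific, and your proposal does not exhibit it. A second omission: because $u''$ vanishes at $x=1$ and $x=a$, the interior-maximum computation says nothing if the maximum of $f'$ sits at an endpoint, and the boundary values of $f'$ (unlike those of $f$) are not fixed by the cohomology class. The paper needs a separate barrier argument (its Proposition \ref{boundary}, constructing comparison lines $q+Ae^{B(n-1)t}(x-1)$) to control $|f_t'(1)|$ and $|f_t'(a)|$; your proposal addresses boundary data only for $\mu$, not for $\lambda$. Finally, for the closing regularity step, note that the operator is not concave under supercritical phase; the paper invokes convexity of its level sets (from \cite{CJY}) to run Evans--Krylov, which is the precise form of the "concavity-type property" you allude to.
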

Note that Takahashi proved long time existence for the line bundle mean curvature flow in the hypercritical phase case \cite{Tak2}, which implies that $\alpha_t$ stays a K\"ahler form and the operator  $\Theta(\cdot)$ is convex. For our result we use the weaker supercritical phase assumption, which does  not imply convexity of the operator is $\Theta(\cdot)$. However, it does imply the level sets are convex \cite{CJY}, which is enough to apply Evans-Krylov. 

To see where the long time singularity occurs, we turn to the conjectured relationship between solutions to the dHYM equation and stability.  Following the work of  Lejmi-Sz\'ekelyhidi on the $J$-equation \cite{LS}, the second author, along with T.C. Collins and   S.-T. Yau, integrated a certain positivity condition along subvarieties to develop a necessary class condition for existence, and conjectured it was a sufficient condition as well  \cite{CJY}. Specifically, for any irreducible analytic subvariety $V\subseteq X$, define the complex number
\begin{equation}
Z_{[\alpha][\omega]}(V):=-\int_V e^{-i\omega+\alpha},\nonumber
\end{equation}
where by convention we only integrate the term in the expansion of order ${\rm dim}(V)$. Under the supercritical phase assumption $Z_{[\alpha][\omega]}(X)$ lies in the upper half plane $\mathbb H$.  The conjecture of Collins-J.-Yau posits that a solution to the dHYM equation exists if and only if 
\begin{equation}
\label{stability1}
 \pi>{\rm arg} Z_{[\alpha][\omega]}(V)>{\rm arg} Z_{[\alpha][\omega]}(X).
 \end{equation}

Later, when $n=3$, Collins-Xie-Yau demonstrated a necessary Chern number inequality \cite{CXY} (which has since been extended to $n=4$ \cite{HJ2}), which is also useful for defining the lifted angle $\hat\theta$ algebraically. Collins-Yau further conjectured that such a Chern number inequality in higher dimension was needed \cite{CY}.   Indeed, recently when $n=3$ an example was found where the stability inequality \eqref{stability1} holds, but the Chern number inequality does not, and no solution to the dHYM equation exists \cite{Z}. We note that slightly weaker versions of the Collins-J.-Yau conjecture have been solved by Chen \cite{C1} (assuming uniform stability), and Chu-Lee-Takahashi \cite{CLT} (in the projective case). These results all rest on the supercritical phase assumption.

Some of the few results without the supercritical phase assumption are due to the second author and Sheu \cite{JS}  (and later \cite{J}), who take advantage of the same Calabi-Symmetry used in this paper. They demonstrate that  the inequalities  \eqref{stability1} can be reinterpreted as stating whether two points in $\mathbb C$ lie on the same level set of a harmonic polynomial, from which it follows that solutions to the dHYM exists. Since the stability conditions are necessary, and supercritical phase  impiles long time existence of the flow in this setting, the unstable case ends up being the perfect setup to construct a singularity for $\alpha_t$ as $t$ approaches infinity. In particular we demonstrate:

\begin{theorem}\label{main3}
	Let $(X,\omega)$ be the blowup of $\mathbb P^n$ at a point, $n\geq 3$. There exists classes $[\alpha]$ and $[\omega]$, which are semi-stable in the sense of \eqref{stability1}, where   the flow \eqref{lbmcf} starting at an initial representative $\alpha_0$ exists for all time and  becomes singular at time $t=\infty$ along the destabilizing subvariety.\end{theorem}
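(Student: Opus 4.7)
The plan is to work entirely within the Calabi-symmetric setup of Theorem~\ref{main2}, where under the $U(n)$ action the flow \eqref{lbmcf} reduces to a one-variable parabolic PDE on a bounded interval. On $X$, the blowup of $\mathbb P^n$ at a point, the irreducible subvarieties distinguished by this symmetry are the exceptional divisor $E\cong\mathbb P^{n-1}$ and the divisor at infinity $D_\infty\cong\mathbb P^{n-1}$, corresponding respectively to the two endpoints of the interval. The first step is to exhibit Calabi-symmetric classes $[\alpha]$ and $[\omega]$ for which (i) the supercritical phase condition $\Theta(\alpha_0)>(n-2)\frac{\pi}{2}$ holds, and (ii) the equality $\arg Z_{[\alpha][\omega]}(V)=\arg Z_{[\alpha][\omega]}(X)$ holds for exactly one $V\in\{E,D_\infty\}$, while the strict inequality in \eqref{stability1} holds for every other irreducible subvariety. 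The harmonic-polynomial reformulation of stability developed in \cite{JS} expresses the stable region and its boundary as explicit real algebraic sets in the finite-dimensional parameter space of Calabi-symmetric classes, so one can find such a pair on the stability boundary inside the open supercritical region.

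With this choice, Theorem~\ref{main2} immediately gives long-time existence, so it remains to show that a singularity forms at $t=\infty$ concentrated along the destabilizing $V$. Suppose for contradiction that $\lambda_{Max}(\cdot,t)$ stayed uniformly bounded for all $t\geq 0$. Since the supercritical phase assumption makes the level sets of $\Theta(\cdot)$ convex \cite{CJY}, Evans--Krylov upgrades a uniform $C^{2}$ bound to a uniform $C^{2,\gamma}$ bound, and bootstrapping yields all higher-order estimates. A time translation and a diagonal subsequence would then produce a smooth limit $\alpha_\infty\in[\alpha]$ solving the static dHYM equation \eqref{dHYM1}. But the necessity direction of the Collins--J.--Yau stability criterion from \cite{CJY} forbids any smooth solution whenever the strict inequality in \eqref{stability1} fails, so no such $\alpha_\infty$ can exist, and $\lambda_{Max}$ must blow up along some sequence $t_k\to\infty$.

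The main obstacle, and the heart of the argument, will be localizing this blow-up exactly on $V$. The plan is to build barrier super- and subsolutions for the Calabi-reduced PDE that confine each eigenvalue $\lambda_k$ on the stable endpoint: because the strict inequality in \eqref{stability1} continues to hold for every subvariety meeting that side of the interval, comparison with explicit time-independent dHYM-type profiles should yield uniform two-sided eigenvalue bounds there, ruling out blow-up away from $V$. To force divergence actually to occur on $V$ itself, one uses that the vanishing of the phase defect $\arg Z_{[\alpha][\omega]}(V)-\arg Z_{[\alpha][\omega]}(X)$ at semi-stability, combined with the obstruction to smooth convergence, means the asymptotic profile of the reduced flow must realize a singular limit precisely at the corresponding endpoint. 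The delicate step is matching the harmonic-polynomial geometry of \cite{JS} with the monotonicity of a suitable energy along the reduced PDE, so as to upgrade ``failure to converge'' into a quantitative eigenvalue blow-up rate at $V$; this is where I expect most of the technical work to concentrate.
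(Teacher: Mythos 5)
There is a genuine gap, and it sits exactly where you flag it: the localization of the infinite-time singularity on the destabilizing subvariety. Your first two steps (choosing a Calabi-symmetric semi-stable pair via the harmonic-polynomial picture of \cite{JS}, with $q$ large enough to force supercritical phase, and invoking Theorem \ref{main2} for long-time existence) match the paper. Your soft contradiction argument then shows at best that $\lambda_{Max}$ cannot stay bounded --- and even that step is incomplete as stated, since a subsequential smooth limit of the flow at $t_k\to\infty$ need not be stationary without a monotone quantity (a Lyapunov functional) whose dissipation forces $\dot\phi_{t_k}\to 0$; bounded eigenvalues plus Evans--Krylov give compactness, not stationarity of the limit. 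More importantly, ``blow-up somewhere along some sequence of times'' is not the theorem: the claim is that the singularity forms precisely along $E$, and your proposed route to this (time-independent dHYM-type barriers on the stable end, plus an unspecified energy-monotonicity argument) is left as a plan rather than a proof.

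The paper's mechanism for the localization is a concrete squeeze argument that your proposal does not contain. One takes the singular stationary graph $f_\infty$ (lying on a level set of ${\rm Im}(e^{-i\hat\theta}z^n)$, with vertical tangent at $(1,q)$ exactly because the class is semi-stable with destabilizer $E$) as a fixed upper barrier, and constructs an explicit one-parameter family of \emph{moving subsolutions} $\gamma_t$ to the curve flow \eqref{mCSF} in polar coordinates: an interpolation $r_t(\theta)$ between a curve near the line $x=a$ and $r_\infty(\theta)$, governed by an ODE for the interpolation parameter $b(t)$, verified to be a subsolution under the geometric conditions $r_\infty'\ge 0$ and $r_\infty'/r_\infty\le 2\tan\theta$ (Proposition 5.1 and the subsequent lemma). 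The initial graph $f_{t_0}$ is chosen between $\gamma_{t_0}$ and $f_\infty$; the maximum principle then traps $f_t$ between the sweeping subsolution and $f_\infty$ for all time, so $f_t\to f_\infty$ in $C^0$, and the vertical tangency of $f_\infty$ at $(1,q)$ forces the eigenvalue $f_t'$ to diverge there and nowhere else. Without this subsolution family (or a substitute for it), your argument does not rule out, say, oscillatory non-convergence or blow-up at an interior point of the interval, so the central assertion of the theorem remains unproved.
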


Similar to the proof of Theorem \ref{main},  we utilize explicit subsolutions of a modified curve shortening flow to force a singularity at infinite time.

Here we briefly discuss two other parabolic flows in the literature for which solutions to  \eqref{dHYM1}  are   fixed points. The first is the tangent Lagrangian phase flow, introduced by Takahashi in \cite{Tak1}. Defined by $\dot\phi_t={\rm tan}(\Theta(\alpha_t)-\hat\theta),$ this flow is the the gradient flow of the Kempf-Ness functional arising from the infinite dimensional GIT picture for stability and the dHYM equation, as  developed by Collins-Yau \cite{CY}. As a result, this flow is well behaved with respect to many important functionals, and it could  be useful when exploring if   some type of limiting Harder-Narasimhan filtration exists in the unstable case. One downside of this flow is that it is only defined for ``almost calibrated'' potentials, when the angle $\Theta(\alpha_t)$ varies from the target angle by less than $\frac\pi 2$.  The second flow  was introduced by Fu-Yau-Zhang and is defined by  the equation $\dot\phi_t={\rm cot}(n\frac\pi 2-\Theta(\alpha_t))-{\rm cot}(n\frac\pi 2-\hat\theta)$ \cite{FYZ}. This flow has the advantage that ${\rm cot}(n\frac\pi 2-\Theta(\alpha_t))$ is concave under the supercritical phase assumption. Additionally the form of the flow allows for some useful estimates for subsolutions. However, this flow is only defined for supercritical phase, since otherwise one may end up taking the cotangent of zero. Note that in comparison to the above flows, the line bundle mean curvature flow is always defined for short time.

The singularity examples we construct point towards many new interesting problems to explore. One question is whether similar singularities can be constructed on more general K\"ahler manifolds, perhaps with some sort of gluing technique. We also wonder if there are any examples which can be related to singularities of the graphical Lagrangian mean curvature flow, given the formal similarities between the two flows. In the above examples, the highest eigenvalue of $\omega^{-1}\alpha_t$ approaches infinity while the derivatives of the eigenvalues stay bounded. Thus  the analogue of the graphical ``Lagrangian'' (given by one derivative of the potential) is tilting up to achieve vertical slope. It would be interesting if one could find examples with higher order singularities, which would allow for  a richer blowup analysis. Finally, in our long time singularity example, the singularity occurs along the destabilizing subvariety, and one would expect this relationship between stability and singularity formation to hold in more general settings. We hope to explore  these problems in future work.

The paper is organized as follows. In Section \ref{symmetry} we   introduce the Calabi symmetry assumption, which is used in constructing our examples. In Section \ref{finite} we construct our finite time singularity. Section \ref{long} contains our proof of long time existence in the supercritical phase case. We conclude with our example of a long time singularity in Section \ref{longsing}.

\medskip

{\bf Acknowledgements.} This problem arose at the American Institute of Mathematics workshop ``Stability in mirror symmetry,'' and the second author would like to thank the organizers and  the institute for providing a productive research environment. In particular special thanks   to Tristan C. Collins, Jason Lotay, and   Felix Schulze for some helpful discussion. This work was funded in part by a Simons collaboration grant. 

\section{Calabi Symmetry}
\label{symmetry}
Throughout this paper we work on  the blowup of $\mathbb P^n$ at one point $p$, which we denote by $X$.  This manifold admits $(1,1)$ forms that satisfy a symmetry ansatz, originally defined by Calabi in \cite{Ca}. We include a short introduction to this ansatz here, and direct the reader to \cite{Ca, FL, JS, Song, SW2, SW3, SW4, SY} for further details.

 Let $E$ denote the  exceptional divisor, and $H$  the pullback of the hyperplane divisor from  $\mathbb P^n$. These two divisors span $H^{1,1}(X,\mathbb R)$, and any K\"ahler class will lie in $a_1[H]-a_2[E]$ with $a_1>a_2>0$. Normalizing,  assume $X$ admits a K\"ahler form $\omega$ in the class $$[\omega]= a[H]-[E],$$
with $a>1$. Furthermore, assume our class $[\alpha]$ satisfies $$[\alpha]= p[H]-q[E],$$ for a choice of $p,q\in\mathbb R$. 

  On $X\backslash (H\cup E)\cong \mathbb C^n\backslash \{0\}$, set $\rho={\rm log}(|z|^2)$. If $u(\rho)\in C^\infty(\mathbb R)$   satisfies $u'(\rho)>0,$ $u''(\rho)>0$, then $\omega=i\partial\bar\partial u$  defines a K\"ahler form on $\mathbb C^n\backslash \{0\}$. For $\omega$ to extend to a K\"ahler form on $X$ in the class $a[H]-[E]$,   $u$ must satisfy   boundary asymptotics. Specifically, define  $U_0, U_\infty:[0,\infty)\rightarrow \mathbb R$ via 
$$U_0(r):= u({\rm log}r)-{\rm log}r\qquad {\rm and}\qquad U_\infty(r):= u(-{\rm log}r)+a{\rm log}r.$$
Assume $U_0$ and $U_\infty$    extend by continuity to  smooth functions at $r=0$, with both $U_0'(0)>0$ and  $U_\infty'(0)>0$. This fixes the  asymptotic behavior of $u$
$$\lim_{\rho\rightarrow-\infty}u'(\rho)=1,\qquad\lim_{\rho\rightarrow\infty}u'(\rho)=a,$$
and ensures  $\omega=i\partial\bar\partial u$ extends to a K\"ahler form  on $X$ in the correct class.

Next, given a function $v(\rho)\in C^\infty(\mathbb R)$, the Hessian $i\partial\bar\partial v(\rho)$ defines a $(1,1)$ form $\alpha$ on $\mathbb C^n\backslash \{0\}$. In order for $\alpha$ to extend to $X$ in the class $[\alpha]= p[H]-q[E]$,  we require similar  asymptotics  without the positivity assumptions, as $ \alpha$ need not be a K\"ahler form. Consider the functions    $V_0, V_\infty:[0,\infty)\rightarrow \mathbb R$ defined via
$$V_0(r):= v({\rm log}r)-q{\rm log}r\qquad {\rm and}\qquad V_\infty(r):= v(-{\rm log}r)+p{\rm log}r.$$
Assume that $V_0$ and $V_\infty$  extend by continuity to  smooth functions at $r=0$, which implies $v(\rho)$ satisfies:   
$$\lim_{\rho\rightarrow-\infty}v'(\rho)=q,\qquad\lim_{\rho\rightarrow\infty}v'(\rho)=p.$$
Then $i\partial\bar\partial v $ extends to a smooth (1,1) form on $X$ in the class $[\alpha]$.  

We refer to forms $\omega$ and $\alpha$ constructed in the above manner as having {\it Calabi Symmetry}. Restricting to $\mathbb C^n\backslash \{0\}$, one can check that in this case the eigenvalues of $\omega^{-1}\alpha$ are $\frac {v'}{u'}$ with multiplicity $(n-1)$, and  $\frac{v''}{u''}$ with multiplicity one (for a proof of this see \cite{FL}). Furthermore, because $u''>0$, the first derivative $u'$ is monotone increasing, allowing us to use Legendre transform coordinates and view $u'$ as a real variable, denoted by $x$, which ranges from $1$ to $a$. One can then write $v'$ as a graph $f$ over $x\in(1,a)$, so we have $f(x)=f(u'(\rho))=v'(\rho)$. Taking the derivative of both sides with respect to $\rho$ gives
$$f'(x)u''(\rho)=v''(\rho).$$
We allow the slight abuse of notation where $f'$ denotes the derivative of $f$ with respect to the variable $x$, and $u''$ and $v''$ denote the second derivatives with respect to the variable $\rho$. By the above, the eigenvalues of $\omega^{-1}\alpha$ are 
$$\frac{v'}{u'}=\frac{f}x\,(\text{with multiplicity}\,n-1)\qquad{\rm and}\qquad\frac{v''}{u''}=f'.$$
As $x\rightarrow 1$, we have $\rho\rightarrow -\infty$,  while $x\rightarrow a$ implies $\rho\rightarrow \infty$. Thus the asymptotics of $v(\rho)$ imply 
$$\lim_{x\rightarrow 1^+}f(x)=q,\qquad\lim_{x\rightarrow a^-}f(a)=p,$$
and we extend $f(x)$   to the boundary $[1,a]$ by continuity.

 In this form, the dHYM equation can be written as an ODE
 \begin{align}\label{ODE}
	\textnormal{Im}\left(e^{-i\hat\theta}\left(1+i\frac{f}{x}\right)^{n-1}\left(1+if'\right)\right)=0
\end{align}
subject to the boundary constraints $f(1)=q$, $f(a)=p$. Furthermore the Lagrangian angle given by the eigenvalues of $\omega^{-1}\alpha$ can be expressed as
$$\Theta(x):=(n-1)\arctan\left(\frac fx\right)+\arctan\left(f'\right).$$
Because $\alpha=i\partial\bar\partial v$, in our setting we can write line bundle mean curvature flow as  
$$\dot v=\Theta(x)-\hat\theta.$$
In order to arrive at an equation for $f$ we take the derivative of both sides with respect to $\rho$ and see
$$\frac{d\dot v}{d\rho}=\frac{d\Theta}{dx}\frac{dx}{d\rho}.$$
This now becomes
\begin{align}\label{dHYM flow}
	\dot f=L(f):=u''\left(\frac{f''}{1+f'^2}+(n-1)\frac{xf'-f}{x^2+f^2}\right)= u''\Theta'.
\end{align}

We have now defined second order parabolic equation for $f$, to which a solution can be integrated in $\rho$ to arrive at a solution of the line bundle mean curvature flow \eqref{lbmcf}.  Note that $u''(1)=u''(a)=0$, so the flow fixes the boundary values of $f$. One interesting consequence of taking an extra derivative to define the flow is that it is no longer necessary to take a lift $\hat\theta$ of the average angle. In this way the flow is more analogous to  a how graph evolves by the mean curvature vector rather than how a potential evolves by the Lagrangian angle. 

The flow \eqref{dHYM flow} is defined on graphs over $[1,a]$ with fixed boundary. However, we can generalize it to curves, which is useful in order to construct barriers. Consider the region $D:=\{(x,y)\in\mathbb R^2\,|\, 1\leq x\leq a\}$.   Let $\gamma_t(s):I\subseteq \mathbb R\to D$ be a family of smooth curves, and let $s$ be the arc-length parameter. Let
$$\kappa=\frac{d}{ds}\arctan{\gamma'}$$
denote the usual plane curvature, and let
$$\xi=\frac{d}{ds}\arctan{\gamma}$$
 be an extrinsic quantity. Consider the flow
\begin{align}
\label{mCSF}
	\dot\gamma = u''(\gamma)\left(\kappa+(n-1)\xi\right) \textbf N,
\end{align}
where the normal vector $\textbf N$ is defined by $e^{i\frac\pi2}\gamma'$, and $u''(\gamma)$ is defined to be the function $u''$ applied to the $x$-coordinate of $\gamma$. Notice the relationship between this  flow  and the curve shortening flow $\dot\gamma = \kappa \textbf N.$

In the case where $\gamma(x)=(x,f(x))$ is a graph of a function, we have $ds=\sqrt{1+f'^2} dx$. Simple computations show
  $$\langle\dot\gamma,\textbf N\rangle=\frac{\dot f}{\sqrt{1+f'^2}},  \qquad \kappa=\frac{f''}{(1+f'^2)^{3/2}}, \qquad 	\xi=\frac{1}{\sqrt{1+f'^2}}\frac{xf'-f}{x^2+f^2}.$$
Hence, \eqref{mCSF} reduces to \eqref{dHYM flow} in this case, and thus is the correct generalization to curves.

\section{A finite time singularity}
\label{finite}

Consider a real number $R>1$ (to be determined later), and set $a=6R$. As above, consider a function $u:\mathbb R\rightarrow\mathbb R$ so that $\omega:=i\partial\bar\partial u$ extends from $\mathbb C^n\backslash\{0\}$ to a K\"ahler form on $X$ in the class $a[H]-[E]$. Furthermore, assume that $u''<R$, and that there exists a small constant $k$ so that $u''(x)\geq k(x-1)(a-x)$ on $[1,a]$ (which is possible since by the Calabi symmetry assumptions $u''(x)$ approaches the boundary linearly in $x$). We remark that one should be able to construct a similar singularity example for any K\"ahler form   satisfying  Calabi symmetry, however we include our extra assumptions on $u$ and $R$ for the ease of presentation. 

The idea is as follows. Consider a class $[\alpha]=p[H]-q[E]$ and assume $p\geq a$. Define a representative $\alpha_0$ via the function   $f_0(x)$,  which has a graph such as in Figure 1. We construct a family of shrinking circles, and a traveling family of hyperbolas, which are subsolutions to \eqref{mCSF}. If $f_t$ is the evolution of $f_0$ via the line bundle MCF \eqref{dHYM flow}, and $f_0$ avoids the initial circle and  hyperbola at time $t=0$, then   by the maximum principle $f_t$ must avoid these families for all time. The hyperbolas  push out past the center of the circles before they shrink to a point,  forcing $f_t$ to achieve vertical slope at some finite time. 

\begin{figure}[h!]
  \includegraphics[scale=0.4]{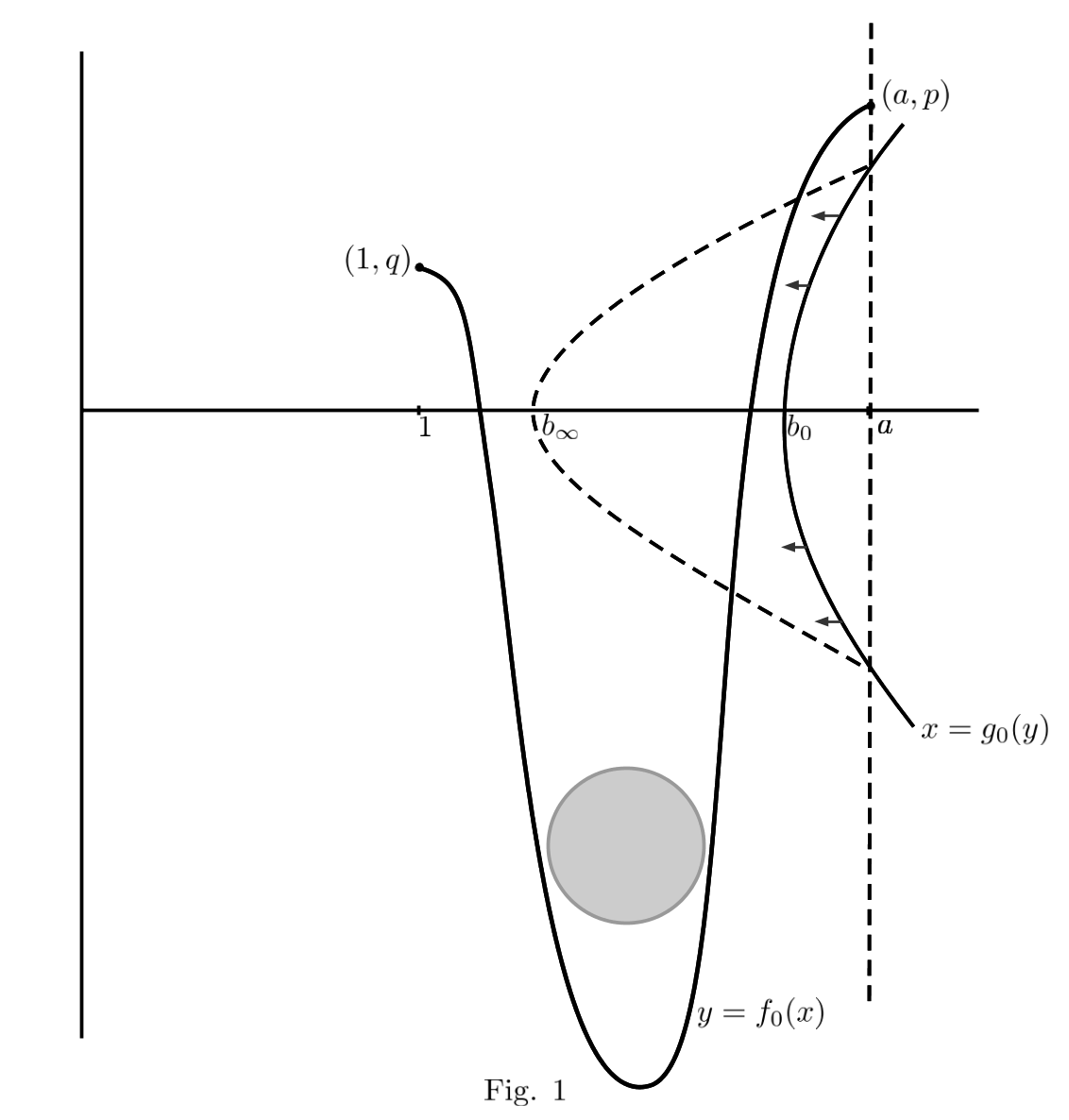}
 
  \caption{The graph of a function $f_0$ which forms a singularity. }
\end{figure}

We first construct our family of hyperbolas. Observe that both $\kappa$ and $\xi$ are invariant under orthogonal transformation. Hence, by interchanging the $x$ and $y$ coordinates, we have the following lemma.

\begin{lemma}\label{rotation}
	Suppose $y=f_t(x)$ satisfies the flow \eqref{dHYM flow}. If the inverse $x=f_t^{-1}(y)=:h_t(y)$ exists, then $h_t(y)$ satisfies 
\begin{align}\label{rdHYM}
	\dot h =u''(h(y))\left(\frac{h''}{1+h'^2}+(n-1)\frac{yh'-h}{y^2+h^2}\right).
\end{align}

\end{lemma}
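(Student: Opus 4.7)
The plan is to verify the lemma by reparametrizing the curve flow \eqref{mCSF}. We have already seen that \eqref{mCSF} reduces to \eqref{dHYM flow} when a curve is written as a graph $y = f(x)$; the claim is that the same curve, rewritten as $x = h(y)$ with $h = f_t^{-1}$, produces the equation \eqref{rdHYM} through the analogous computation with the roles of $x$ and $y$ interchanged. Crucially, the three geometric ingredients of \eqref{mCSF} behave well under change of parametrization: $\kappa$ is intrinsic to the curve, $\xi$ depends only on the polar coordinate function (which is defined using the origin, a fixed point of any orthogonal transformation), and the normal $\textbf{N}$ is the $\pi/2$-rotation of the unit tangent. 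The only coordinate-sensitive ingredient is the scalar factor $u''(\gamma)$, which by definition is $u''$ of the $x$-coordinate of the point on the curve.

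First I would parametrize the curve as $\gamma(y) = (h(y), y)$ and apply the standard plane-curve formulas. Using $ds = \sqrt{1+h'^2}\, dy$ and $\textbf{N} = (-1, h')/\sqrt{1+h'^2}$, a direct calculation gives
$$\kappa = -\frac{h''}{(1+h'^2)^{3/2}}, \qquad \xi = -\frac{1}{\sqrt{1+h'^2}}\cdot\frac{y h' - h}{y^2 + h^2}.$$
The overall sign discrepancies relative to the $f$-case reflect the fact that, when $f$ is decreasing, the tangent orientation induced by increasing $y$ is opposite to the one induced by increasing $x$; these are conventions of the parametrization that will cancel in the final equation.

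Next I would relate $\dot h$ to the normal velocity. Differentiating $f_t(h_t(y)) = y$ in $t$ yields $\dot h = -\dot f/f'$, and the velocity of the curve at a point of fixed $y$-coordinate is $\dot\gamma = (\dot h, 0)$, so $\langle \dot\gamma, \textbf{N}\rangle = -\dot h/\sqrt{1+h'^2}$. Since the $x$-coordinate of the point $\gamma(y)$ is $h(y)$, the coefficient $u''(\gamma)$ equals $u''(h(y))$, which is exactly the factor appearing in \eqref{rdHYM}.

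Finally, I would substitute these ingredients into $\langle\dot\gamma, \textbf{N}\rangle = u''(\gamma)(\kappa + (n-1)\xi)$, obtaining
$$-\frac{\dot h}{\sqrt{1+h'^2}} = u''(h(y))\left(-\frac{h''}{(1+h'^2)^{3/2}} - (n-1)\,\frac{y h' - h}{(y^2 + h^2)\sqrt{1+h'^2}}\right),$$
and multiplying both sides by $-\sqrt{1+h'^2}$ yields precisely \eqref{rdHYM}. There is no serious obstacle; the only thing requiring care is the consistent handling of signs arising from the opposite parametrization orientations, and as observed above these all cancel because $\kappa$ and $\xi$ are geometric while $u''(\gamma)$ automatically tracks the $x$-coordinate of the point.
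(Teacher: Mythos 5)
Your proof is correct and takes essentially the same route as the paper: the paper simply observes that $\kappa$ and $\xi$ are invariant under orthogonal transformations, so interchanging $x$ and $y$ turns the graph reduction of \eqref{mCSF} over the $x$-axis into \eqref{rdHYM} over the $y$-axis. Your argument is the explicit computational verification of that observation, with the orientation signs and the coordinate-sensitive factor $u''(\gamma)$ (which correctly becomes $u''(h(y))$) handled properly.
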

%\begin{proof}
%	Simply put in $f'=\frac1{h'}$, $f''=\frac{-h''}{h'^3}$ and $\dot f=-\frac{\dot h}{h'}$ into the dHYM equation we we get the result.
%	\end{proof}

\begin{lemma}\label{subsolution}
	Suppose $b(t):[0,T)\to \mathbb R$ satisfies the initial value problem:
\begin{align}\label{IVP}
	\dot b=-\frac{kb_{\infty}(b_\infty-1)(a^2-b_0^2)(b-b_\infty)b^3}{a(a^2-b_\infty^2)(2a^2-b_\infty^2)^2}
\end{align}
	where $1<b_\infty <b_0<a$ are constants. Then 
	$$g_t(y):=\sqrt{\frac{a^2-b^2}{a^2-b_\infty^2}y^2+b^2}$$ is a sub-solution to the equation \ref{rdHYM} for $y\in \left[-\sqrt{a^2-b_\infty^2},\sqrt{a^2-b_\infty^2}\right]$.
\end{lemma}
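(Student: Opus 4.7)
The strategy is a direct verification of the subsolution inequality
\[
\dot g_t \leq u''(g_t)\left(\frac{g_t''}{1+(g_t')^2} + (n-1)\frac{yg_t' - g_t}{y^2 + g_t^2}\right).
\]
The particular ODE \eqref{IVP} will turn out to be exactly the worst-case pointwise estimate produced by this computation, so the task reduces to careful bookkeeping rather than any conceptual difficulty.

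First I would put the spatial operator in closed form. Setting $A(t):=(a^2-b^2)/(a^2-b_\infty^2)$ so that $g^2 = Ay^2 + b^2$, implicit differentiation gives $g' = Ay/g$, and differentiating $g g' = Ay$ again gives $g g'' = A - (g')^2 = Ab^2/g^2$. Hence $1+(g')^2 = (A(1+A)y^2 + b^2)/g^2$ and $y^2 + g^2 = (1+A)y^2 + b^2$. Substituting and simplifying,
\[
\frac{g''}{1+(g')^2} + (n-1)\frac{yg'-g}{y^2+g^2} = \frac{b^2}{g}\left(\frac{A}{A(1+A)y^2 + b^2} - \frac{n-1}{(1+A)y^2 + b^2}\right),
\]
which is non-positive for $n\geq 2$ since $A\leq 1$.

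Next, differentiating $g^2 = Ay^2 + b^2$ in $t$, and using $\dot A = -2b\dot b/(a^2-b_\infty^2)$, yields
\[
\dot g_t = \frac{b\dot b\,(a^2 - b_\infty^2 - y^2)}{g\,(a^2 - b_\infty^2)} \leq 0,
\]
where the sign uses $\dot b < 0$ (all other factors in \eqref{IVP} are positive on $[0,T)$) together with $|y|\leq\sqrt{a^2-b_\infty^2}$. Thus the subsolution inequality reduces, after clearing $1/g$, to
\[
\frac{b\dot b\,(a^2-b_\infty^2-y^2)}{a^2-b_\infty^2} \leq u''(g)\,b^2\left(\frac{A}{A(1+A)y^2 + b^2} - \frac{n-1}{(1+A)y^2 + b^2}\right),
\]
an inequality between two non-positive quantities.

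The remaining step, which is the main obstacle, is a sequence of worst-case extremizations to collapse the two-variable pointwise inequality into a one-variable inequality for $\dot b$ alone. Using the Calabi lower bound $u''(g)\geq k(g-1)(a-g)$ with $g\in[b,a]$, I would bound the right-hand side in absolute value from below by extremizing in $y^2\in[0,a^2-b_\infty^2]$ and $b\in[b_\infty,b_0]$. The specific constants in \eqref{IVP} should correspond precisely to these extrema: the factor $(2a^2 - b_\infty^2)^2$ appears from the denominator $(1+A)y^2+b^2$ evaluated at $y^2=a^2-b_\infty^2$ (where it simplifies to $2a^2-b_\infty^2-b^2 + b^2$), the factor $(a^2-b_0^2)$ from the uniform lower bound $(a-g)(a+g)\geq a^2-b_0^2$ for $g\leq a$ after combining with $g\geq b_\infty$, and the factor $b_\infty(b_\infty-1)$ from the lower bounds $b,g\geq b_\infty$ combined with $(g-1)\geq (b_\infty-1)$. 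Verifying that these extrema are indeed attained simultaneously, and assemble into the precise constant on the right-hand side of \eqref{IVP}, is the delicate computational step; once this is done, the sufficient condition for $g_t$ to be a subsolution is exactly \eqref{IVP}.
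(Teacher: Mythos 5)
Your setup and the computations you actually carry out coincide with the paper's: the closed forms for $g'$, $g''$, the spatial operator, and $\dot g$ are all correct, as is the use of $n-1\ge 1$ and of the Calabi bound $u''\ge k(g-1)(a-g)$. The gap is precisely in the step you defer as ``the delicate computational step,'' and the sketch you give of it contains an error that would prevent the argument from closing. You propose to bound the right-hand side below in absolute value by extremizing in $y^2$, citing in particular ``the uniform lower bound $(a-g)(a+g)\ge a^2-b_0^2$.'' This is false: with your notation $(a-g)(a+g)=a^2-g^2=a^2-Ay^2-b^2=A\left(a^2-b_\infty^2-y^2\right)$, which vanishes at the endpoints $y^2=a^2-b_\infty^2$, where $g=a$. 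Consequently the lower bound $k(g-1)(a-g)$ for $u''(g)$ degenerates there, and no uniform-in-$y$ negative upper bound for $u''(g)$ times the spatial operator exists; a worst-case extremization over $y$ yields only the trivial bound $0$, which cannot produce the strictly negative right-hand side of \eqref{IVP}.

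The missing idea is that this degeneracy is exactly matched by the degeneracy of the left-hand side: $\dot g$ carries the same factor $(a^2-b_\infty^2-y^2)$. One must use the identity $(a-g)(a+g)=A\left(a^2-b_\infty^2-y^2\right)$ as an equality, factor $(a^2-b_\infty^2-y^2)$ out of both $\dot g$ and the upper bound for the elliptic term, and only then bound the remaining quantities by constants: $g^2+y^2\le 2a^2-b_\infty^2$ and $g^2+A^2y^2\le 2a^2-b_\infty^2$ (giving the square), $a+g\le 2a$, $g\ge b_\infty$, $g-1\ge b_\infty-1$, and $a^2-b^2\ge a^2-b_0^2$ together with $b+b_\infty\ge 2b_\infty$ coming from the expansion of $A(1-A)$. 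In particular the factor $a^2-b_0^2$ in \eqref{IVP} arises from the bound $b\le b_0$ applied to $a^2-b^2$, not from $(a-g)(a+g)$. With that correction the rest of your outline goes through and the pointwise inequality collapses to \eqref{IVP} as claimed.
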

\begin{proof}
	For simplicity, write
$$ m = \frac{a^2-b^2}{a^2-b_\infty^2},\qquad 1-m  = \frac{b^2-b_\infty^2}{a^2-b_\infty ^2}.$$
We also write $g=g_t$ for notational simplicity.  Notice that $b_0\ge b>b_\infty$ from the initial value problem, so $m<1$. We compute 
$$g'=\frac{my}{\sqrt{my^2+b^2}}=\frac{my}g,$$
which in turn gives
$$g''=\frac{m}{g}-\frac{myg'}{g^2}=\frac{m}{g}-\frac{m^2y^2}{g^3}=\frac{mg^2-m^2y^2}{g^3}=\frac{mb^2}{g^3}.$$
Furthermore the two expressions from \eqref{rdHYM} can be written as
$$\frac{g''}{1+g'^2}=\frac{mb^2}{g(g^2+m^2y^2)},$$
and
	$$\frac{yg'-g}{y^2+g^2}= \frac{my^2-g^2}{g(y^2+g^2)}=\frac{-b^2}{{g(g^2+y^2)}}.$$
Thus
\begin{align*}
	L(g)&:=u''(g(y))\left(\frac{g''}{1+g'^2}+(n-1)\frac{yg'-g}{y^2+g^2}\right)\\
	&= u''(g(y))\left(\frac{mb^2}{g(g^2+m^2y^2)}-(n-1)\frac{b^2}{{g(g^2+y^2)}}\right)\\
	&\le u''(g(y))\left(\frac{mb^2}{g(g^2+m^2y^2)}-\frac{b^2}{{g(g^2+y^2)}}\right)\\
	&=u''(g(y))\frac{-(1-m)b^4}{g(g^2+m^2y^2)(g^2+y^2)}\\
	& \le k(g-1)(a-g)\frac{-(1-m)b^4}{g(g^2+m^2y^2)(g^2+y^2)}\le 0,
\end{align*}
where the last inequality follows from our assumption $u''(x)\geq k(x-a)(a-x)$.  

Now, observe that 
$$ (a-g)(a+g)=a^2-my^2-b^2=m\left(\frac{a^2-b^2}m-y^2\right)=m( a^2-b_\infty^2-y^2).$$
The right hand side is non-negative when $y\in\left[-\sqrt{a^2-b_\infty^2},\sqrt{a^2-b_\infty^2}\right]$. As a result
\begin{align*}
L(g) &\leq \frac{-k(g-1)m( a^2-b_\infty^2-y^2)(1-m)b^4}{g(a+g)(g^2+m^2y^2)(g^2+y^2)}\\
& = \frac{-k( a^2-b_\infty^2-y^2)(g-1)(a^2-b^2)(b+b_\infty)(b-b_\infty)b^4}{g(a+g)(a^2-b_\infty^2)^2(g^2+m^2y^2)(g^2+y^2)},
\end{align*}
where we plugged in   the definition of   $m$ and $(1-m)$. Because the above expression is negative, the inequalities $m<1$, $b_\infty \leq g\leq a$, and $b_\infty<b\leq b_0$, allow us to conclude
$$ L(g) \leq \frac{-k( a^2-b_\infty^2-y^2)(b_\infty-1)(a^2-b_0^2)2b_\infty(b-b_\infty)b^4}{g(2a)(a^2-b_\infty^2)^2(2a^2-b_\infty^2)^2}.$$
 
 Next, we turn to the evolution of $g$:
\begin{align*}\dot g =\frac{\dot m y^2+2b\dot b}{2g}=\frac{\frac{-2b\dot b}{a^2-b_\infty^2}y^2+2b\dot b}{2g}&=\frac{b\dot b}{g}\left(1-\frac{y^2}{a^2-b_\infty^2}\right)\\
&=\frac{b\dot b(a^2-b_\infty^2-y^2)}{g(a^2-b_\infty ^2)}\leq 0.
\end{align*}
Putting everything together we arrive at
 \begin{align*}
 	 \dot g-L(g) \geq \frac{b(a^2-b_\infty^2-y^2)}{g(a^2-b_\infty ^2)}\left(\dot b+\frac{k(b_\infty-1)(a^2-b_0^2) b_\infty(b-b_\infty)b^3}{a(a^2-b_\infty^2)^2(2a^2-b_\infty^2)} \right).
 \end{align*}
 The right hand side is zero by the initial value problem. Hence we have demonstrated  $\dot g-L(g)\ge 0$.
 
\end{proof}

We now solve the initial value problem  \eqref{IVP}, and compute the time for which the hyperbola pushes out a specified distance. Set $b_\infty=R$, and $b_0=5R$. Recall $a=6R$, so $1<b_\infty<b_0<a$. Note there exists a constant $M>0$ such that 
$$C_1:=k\frac{b_{\infty}(b_\infty-1)(a^2-b_0^2)}{a(a^2-b_\infty^2)(2a^2-b_\infty^2)^2}\ge \frac{1}{MR^3}.$$
The differential equation \ref{IVP} is separable, yielding
$$-C_1dt =\frac{db}{(b-b_\infty)b^3},$$
which has the solution
$$-C_1t+C_0= \frac{b_\infty^2+2b^2\log(b-b_\infty)+2b_\infty b-2b^2\log(b)}{2b_\infty^3 b^2}$$
 where $C_0$ is given by the initial value $b(0)=b_0=5R$. Plugging in $t=0$ we see directly that
 $$C_0=\frac{11/50+{\rm log}(4/5)}{R^3}.$$
 Let $T$ be the time such that $b(T)=2R$. Then, we have 
\begin{align}\label{time}
	T=\frac1{C_1}\left(\frac{11/50+{\rm log}(4/5)}{R^3}-\frac{5/8-{\rm log}(2)}{R^3}\right)\leq A
\end{align}
for some constant $A$.

\begin{proposition}
\label{hyperbola}
Let $\gamma(t)$ satisfy \eqref{mCSF}. If $\gamma(0)$ does not intersect the hyperbola $g_0(y)$, then $\gamma(t)$ does not intersect $g_t(y)$ for as long as the flow is defined.
\end{proposition}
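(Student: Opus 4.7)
The approach is the standard avoidance principle for parabolic geometric flows, combined with the subsolution property from Lemma \ref{subsolution}. Suppose for contradiction that $\gamma(0)$ avoids $g_0$ but there is a first time $t_0 \in (0, T)$ at which $\gamma(t_0)$ meets $g_{t_0}$ at a point $p_0 \in D$. Compactness of the hyperbola guarantees a well-defined first contact $(t_0, p_0)$, and at $p_0$ the two curves must be tangent with $\gamma$ lying locally on the prescribed side of $g$ (the side on which it has remained for $t < t_0$).

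At $p_0$, I would perform a local graphical reduction. Assuming first that the common tangent is not horizontal, Lemma \ref{rotation} lets me express both $\gamma$ and the hyperbola as graphs $x = h_\gamma(y, t)$ and $x = g_t(y)$ in a neighborhood of $p_0 = (x_0, y_0)$. These graphs satisfy \eqref{rdHYM}: the first with equality and the second with inequality $\dot g_t \geq L(g_t)$ by Lemma \ref{subsolution}. Set $w(y, t) := h_\gamma(y, t) - g_t(y)$ (or its negative, as dictated by which side $\gamma$ is on); then $w \geq 0$ for $t \leq t_0$ near $(y_0, t_0)$ and $w(y_0, t_0) = 0$. Because at the contact point $h_\gamma = g_t$ and $\partial_y h_\gamma = g_t'$, the lower-order terms of $L(h_\gamma)$ and $L(g_t)$ agree, and subtracting the evolution equations yields a linear parabolic differential inequality $\dot w \leq a(y,t) w_{yy} + b(y,t) w_y$ with leading coefficient $a = u''(g_t)/(1+g_t'^2) > 0$ on the interior of the strip. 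The strong parabolic maximum principle then forces $w \equiv 0$ on the connected past-component, contradicting $w > 0$ at $t = 0$.

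The main obstacle is the horizontal-tangent case, where the graphical reduction over $y$ breaks down. From the explicit form $g_t'(y) = my/g_t$ computed in the proof of Lemma \ref{subsolution}, such a contact can only occur on the symmetry axis $y = 0$, and one can either run the analogous comparison with the roles of $x$ and $y$ interchanged, or, more robustly, perturb $g_t$ to a strict supersolution (for instance by shifting the hyperbola rigidly by $\epsilon$ in the direction opposite to $\gamma$, which preserves the inequality in Lemma \ref{subsolution} by continuity of $L$ in $g$ and its derivatives) and then send $\epsilon \to 0$. A secondary technical point is that the endpoints of $g_t$ at $y = \pm\sqrt{a^2 - b_\infty^2}$, where $g_t = a$, lie on the boundary $x = a$ of $D$; these endpoints are fixed in time, and since $\gamma$ is constrained to remain in $D$, no spurious contact can develop there.
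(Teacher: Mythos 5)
Your proof is correct and follows essentially the same route as the paper: first contact time, local graphical reduction over $y$ via Lemma \ref{rotation}, the subsolution inequality of Lemma \ref{subsolution}, and the parabolic maximum principle applied to the difference of the two graphs. One small remark: the exceptional case you worry about is vacuous and your diagnosis of it is inverted --- since $g_t'(y)=my/g_t$ is always finite, the hyperbola never has a horizontal tangent (at $y=0$ its tangent is vertical, which is harmless for a graph $x=h(y)$), and this is precisely the observation the paper uses to justify the graphical reduction at every possible contact point.
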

\begin{proof}
Suppose $p=(x_0, y_0)$ is the first point of intersection of the two curves, occurring at time $t_0$. Since the hyperbola $g_t$ never achieves horizontal slope, we can assume near  $p$ that $\gamma(t)$ is a graph of a function $h_t(y)$ over the ball  $B_\delta(y_0)$ in $y$-axis solving \eqref{rdHYM}. Without loss of generality, for $0\leq t<t_0$ assume that $g_t(y)>h_t(y)$  over $B_\delta(y_0)$. Then over the region $B_\delta(y_0)\times[0,t_0),$ we see $(\frac{d}{dt}-L)(g-h)\geq 0$, yet $g-h>0$ on the parabolic boundary. The result follows from the maximum principle. 
\end{proof}

Next we turn to the family of shrinking circles which act as a barrier. Since $\xi$ is relatively small for a curve far away from the origin,  \eqref{mCSF} behaves similarly to the curve shortening flow in this case. The idea is to consider a family of circles far away from the origin which evolve slightly faster than  curve shortening flow, in order to absorb the small $\xi$ term.
\begin{proposition}\label{circle avoidance}
	For $R=a/6>1$ as above,   assume the graph of   $f_0(x)$ does not intersect the ball $B_{R}(3R,y_0)$. Then, for $y_0$ sufficiently negative, the family of shrinking balls $B_{\sqrt{R^2-4Rt}}(3R,y_0)$ does not intersect the family of graphs of $f_t(x)$ evolving via  \eqref{dHYM flow}, as long as the flow is defined.
\end{proposition}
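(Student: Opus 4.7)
The plan follows the same pattern as the proof of Proposition~\ref{hyperbola}: I exhibit a family of strict subsolutions to \eqref{dHYM flow} (or to \eqref{rdHYM} where appropriate) whose graphs sweep out the boundary $\partial B_{r(t)}(3R,y_0)$, and then conclude via the parabolic maximum principle, where $r(t):=\sqrt{R^2-4Rt}$.

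First, I parametrize the top half of the shrinking ball as $g_t(x):=y_0+\sqrt{r(t)^2-(x-3R)^2}$. A routine computation, using $\tfrac{d}{dt}(r^2)=-4R$, yields
\begin{align*}
\dot g_t - L(g_t) \;=\; \frac{u''(g_t)-2R}{g_t-y_0} \;-\; (n-1)\,u''(g_t)\,\frac{xg_t'-g_t}{x^2+g_t^2}.
\end{align*}
The hypothesis $u''<R$ forces the first term to be bounded above by $-R/(g_t-y_0)<0$; this is the gain that lets the shrinking circle dominate the mean-curvature term. For the extrinsic ($\xi$) term, the numerator grows at most linearly in $|y_0|$ on the ball while the denominator $x^2+g_t^2$ grows quadratically, so the second term is $O(1/|y_0|)$ as $y_0\to-\infty$. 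Choosing $|y_0|$ sufficiently large therefore arranges $\dot g_t-L(g_t)<0$, making $g_t$ a strict subsolution of \eqref{dHYM flow}.

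Near the left and right edges of the ball the tangent becomes vertical and the graph-over-$x$ parametrization degenerates; there I invoke Lemma~\ref{rotation} and parametrize those arcs as $h_t^\pm(y)=3R\pm\sqrt{r(t)^2-(y-y_0)^2}$, running the analogous estimate for \eqref{rdHYM}. Together the two parametrizations cover all of $\partial B_{r(t)}(3R,y_0)$, so any first tangential contact between the graph of $f_t$ and the shrinking ball is captured in one of these two graph pictures.

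With strict subsolutions in hand, the conclusion mirrors that of Proposition~\ref{hyperbola}. Suppose, for contradiction, that $p=(x_*,y_*)$ were a first intersection between the graph of $f_t$ and $\partial B_{r(t)}(3R, y_0)$ at some time $t_0<R/4$. In a neighborhood of $p$, both curves are graphs of a common coordinate; the difference $f_t - g_t$ (or its rotated analogue) is non-negative before $t_0$ and attains an interior minimum of $0$ at $p$. Yet the strict subsolution inequality combined with the touching conditions forces the time derivative of this difference to be strictly positive at $p$, contradicting the parabolic maximum principle. The main obstacle in executing this plan is the uniform control of the $\xi$-term in $L(g_t)$, and it is precisely this step that requires $y_0$ to be taken sufficiently negative.
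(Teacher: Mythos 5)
Your strategy is the same as the paper's: use the shrinking circles $r(t)=\sqrt{R^2-4Rt}$ as barriers, let the margin $\dot r r=-2R$ versus $u''<R$ absorb the curvature term, kill the extrinsic $\xi$-term by sending $y_0\to-\infty$, and conclude by a first-contact maximum principle argument. Your key identity $\dot g-u''\tfrac{g''}{1+g'^2}=\tfrac{u''-2R}{g-y_0}$ is exactly the paper's computation. However, there are two points where your write-up does not quite close.

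First, an orientation slip: you verify the \emph{subsolution} inequality $\dot g-L(g)<0$ for the \emph{upper} semicircle and then compare with a graph $f_t\geq g_t$ touching from above. In the configuration actually used (Figure 1, and the only case relevant to Theorem \ref{main}), $f_0$ passes \emph{below} the ball, so first contact occurs on the \emph{lower} semicircle $\phi_t=y_0-\sqrt{r^2-(x-3R)^2}$ and what is needed is the \emph{supersolution} inequality $\dot\phi-L(\phi)>0$ with $f_t\leq\phi_t$. The computation is parallel (and the paper does exactly this), but it is not literally the one you wrote, and the sign of the $\xi$-term is not the same in the two cases, so the lower-semicircle case must be checked. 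Second, your claim that the $\xi$-term is uniformly $O(1/|y_0|)$ over the semicircle is false near the vertical-tangency points, where $g_t'$ blows up and the numerator $xg_t'-g_t$ is unbounded; the estimate still closes because the favorable term $R/(g_t-y_0)=R/\sqrt{r^2-(x-3R)^2}$ blows up at the same rate, which is precisely the grouping the paper performs. Your alternative of covering the side arcs with the rotated chart of Lemma \ref{rotation} would also work, but then you should observe that a graph over $[1,a]$ approaching the ball from below cannot have its first contact at the leftmost or rightmost points of the circle, so the two charts genuinely suffice. With these repairs your argument coincides with the paper's proof.
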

\begin{proof}
	Locally, we can write $\phi_t(x)=-\sqrt{r(t)^2-(x-3R)^2}+y_0$ as equation   representing the lower boundary of the shrinking balls, where $r(t)=\sqrt{R^2-4Rt}$. Direct computation gives
	\begin{align*}
		u''\frac{\phi''}{1+\phi'^2}-\dot \phi&=\frac{u''-2R}{\sqrt{r^2-(x-3R)^2}}<\frac{-R}{\sqrt{r^2-(x-3R)^2}},
	\end{align*}
since by assumption $u''<R$. Suppose  $t=t_0$ is the first time the graph of $\phi_t$ intersects $f_t$ from above at a point $x_0$. At this point of intersection we have $f'_{t_0}(x_0)=\phi'_{t_0}(x_0)$, $\dot f_{t_0}(x_0)\geq \dot \phi_{t_0}(x_0)$, and we can assume $f_t(x)<\phi_t(x)$ for all $t<t_0$, and so $f''_{t_0}(x_0)\leq \phi''_{t_0}(x_0)$. Then at $t=t_0$, $x=x_0$, we have
	\begin{align*}
		\dot f-\dot \phi&=-\dot \phi+u''\left(\frac{f''}{1+f'^2}+(n-1)\frac{x_0f'-f}{x_0^2+f^2}\right)\\
		&\leq-\dot\phi+ u''\left(\frac{\phi''}{1+\phi'^2}+(n-1)\frac{x_0\phi'-\phi}{x_0^2+\phi^2}\right)\\
		&<  - \frac{R}{\sqrt{r^2-(x_0-3R)^2}}+u''(n-1)\frac{x_0\phi'-\phi}{x_0^2+\phi^2}.
	\end{align*}
To achieve a contradiction we need to show that for $y_0$ sufficiently negative the right hand side above is negative. To control the $\phi'$ term we can compute directly 
$$ - \frac{R}{\sqrt{r^2-(x_0-3R)^2}}+\frac{u''(n-1)x_0\phi' }{x_0^2+\phi^2}=  \frac{-R(x_0^2+\phi^2)+(n-1)u''(x_0-3R)}{(x_0^2+\phi^2)\sqrt{r^2-(x_0-3R)^2}}. $$
Recall that by assumption  $u''<R$. Choose $y_0$ sufficiently negative  to ensure $-(x_0^2+\phi^2)+(n-1)(x_0-3R)\leq -\frac12(x_0^2+\phi^2)$. Then
$$ - \frac{R}{\sqrt{r^2-(x_0-3R)^2}}+\frac{u''(n-1)x_0\phi' }{x_0^2+\phi^2}\leq  \frac{-R}{2\sqrt{r^2-(x_0-3R)^2}}<-\frac12 $$
since $r<R$. We have now demonstrated that 
$$\dot f-\dot \phi<-\frac12-\frac{u''(n-1) \phi}{x_0^2+\phi^2}.$$
The function $\phi$ is negative, so the second term on the right hand side above  is positive. However, we can choose $y_0$ sufficiently negative so that this term is less than $\frac12$, and the result follows.
\end{proof}

We now demonstrate the existence of a singularity using our two subsolutions constructed above.

\begin{figure}[h!]
  \includegraphics[scale=0.4]{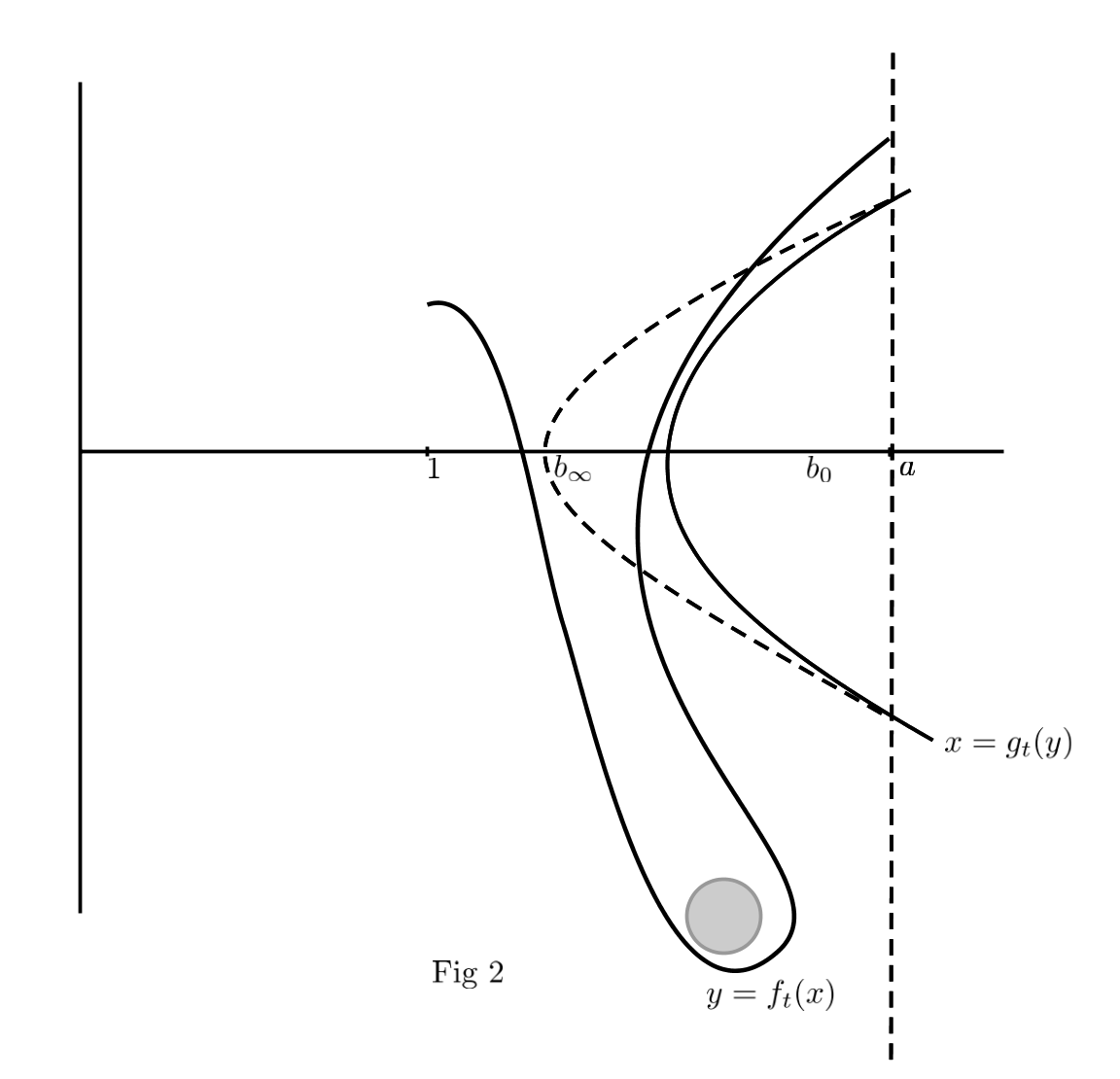}
  \caption{The maximum principle forces $f_t$ to achieve vertical slope. }
\end{figure}

For $R>1$, set $b_\infty=R$, $b_0=5R$, and $a=6R.$  Consider the circle of radius $R$ centered around $(3R, y_0)$, with $y_0$ sufficiently negative so that the hypothesis of Proposition \ref{circle avoidance} is satisfied.  The right side of the circle lies on the line $x=4R$. Note that the vertex of the hyperbola $g_0(y)$ lies on the line $x=b_0=5R$. Furthermore, the hyperbola intersects $x=a$ at $y=\pm\sqrt{35 R^2}$. Since $p>a=6R,$ we see $(a,p)$ lies above the top of the hyperbola $g_0(y)$. Thus, it is possible to choose a function $f_0:[1,a]\rightarrow\mathbb R$ with $f_0(1)=q$, $f_0(a)=p$, such that $f_0$ goes below $B_R(3R, y_0)$, then increases above the hyperbola $g_0(y)$ before arriving at $(a,p)$.

Let $f_t(x)$ be the solution of \eqref{dHYM flow} starting at $f_0(t)$. By Proposition \ref{hyperbola} and Proposition \ref{circle avoidance}, $f_t(x)$ can not intersect $g_t(y)$ nor $B_{\sqrt{R^2-4Rt}}(3R,y_0)$ as long as the flow is defined. Note it takes  time $t=R/4$ for $B_{\sqrt{R^2-4Rt}}(3R,y_0)$ to shrink a point. Also, if $T$ is the time the hyperbola $g_T(y)$ has pushed out to the line $x=2R$, as we have seen by \eqref{time} there exists a constant $A$ such that $T\leq A$. Hence,  choose $R$ large enough to ensure $A<R/4$ and thus $T<R/4$, which implies the hyperbola will push past the center of the shrinking  circles before they completely disappear. This forces $f_t$ to first have a vertical tangency, as illustrated in Figure 2, demonstrating the existence of a finite time singularity and proving Theorem \ref{main}.

\section{Long time existence}
\label{long}
The example above shows that a finite-time singularity for the flow \eqref{dHYM flow} can occur in the interior of the interval $(1,a)$. In particular, one can not always expect $\sup_{(1,a)} |f_t'(x)|$ to stay bounded for finite time. The main goal of this section is to rule out a finite time singularity if one chooses a sufficiently nice initial function $f_0(x)$, specifically where the corresponding $(1,1)$ form $\alpha_0$ has  supercritical phase. This is an important step towards the construction of a singularity at infinite time.

As a first step we show that along the flow, the first   derivative $ |f_t'(x)|$ stays bounded at the boundary points $x=1$ and $x=a$. In fact, our boundary estimate does not need the  supercritical phase assumption.

\begin{proposition}\label{boundary}
	Suppose $f_t(x)$ is defined on $(t,x)\in [0,T)\times[1,a]$. Then, there exists uniform constants $A, B$ so that $$|f_t'(1)|+|f_t'(a)|\leq Ae^{Bt}.$$
\end{proposition}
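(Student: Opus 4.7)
The plan is to apply the parabolic maximum principle with explicit linear barriers at each endpoint. The argument rests on two preliminary observations: a uniform $C^0$ bound on $f_t$, and the linear vanishing of $u''$ at the boundary points.

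First I would establish a uniform $C^0$ bound $|f_t(x)| \leq M$ where $M := \max(|p|, |q|, \sup_{[1,a]} |f_0|)$. The constant $M$ is a supersolution of \eqref{dHYM flow} since
$$L(M) = -u''(x)(n-1)\frac{M}{x^2+M^2} \leq 0 = \dot M,$$
and similarly $-M$ is a subsolution; combined with the boundary values $|f_t(1)| = |q|, |f_t(a)| = |p|$, the maximum principle yields the bound. Next, using the Calabi asymptotic $u(\rho) = U_0(e^\rho) + \rho$ together with $U_0'(0) > 0$, a Taylor expansion at $\rho = -\infty$ gives $u''(\rho) = U_0'(0)e^\rho + O(e^{2\rho})$ and $x - 1 = U_0'(0)e^\rho + O(e^{2\rho})$, so $u''(x) = (x-1) + O((x-1)^2)$ near $x=1$; symmetrically $u''(x) = O(a-x)$ near $x = a$. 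In particular there exist $\delta, K > 0$ such that $u''(x) \leq K(x-1)$ on $[1, 1+\delta]$ and $u''(x) \leq K(a-x)$ on $[a-\delta, a]$.

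The main step is to introduce barriers of the form $h^\pm(x,t) := q \pm C_0 e^{Bt}(x-1)$ on $[1, 1+\delta]$ and $\tilde h^\pm(x,t) := p \mp C_0 e^{Bt}(a-x)$ on $[a-\delta, a]$, with $B := 2K(n-1)$ and $C_0$ chosen large enough that $h^-(x,0) \leq f_0(x) \leq h^+(x,0)$ (and similarly for $\tilde h^\pm$) and that the barriers dominate the $C^0$ bound at the interior edges $x = 1+\delta, a - \delta$. Since $(h^\pm)'' = 0$, one computes explicitly
$$L(h^\pm)(x,t) = u''(x)(n-1)\frac{\pm C_0 e^{Bt} - q}{x^2 + h^\pm(x,t)^2},$$
and combining $u''(x) \leq K(x-1)$, the lower bound $x^2 + (h^\pm)^2 \geq 1$, and $C_0 \geq |q|$ yields $|L(h^\pm)| \leq 2K(n-1)(x-1)C_0 e^{Bt}$. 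But $|\dot h^\pm| = BC_0 e^{Bt}(x-1) = 2K(n-1)C_0 e^{Bt}(x-1)$, so with the signs checked, $h^+$ is a supersolution and $h^-$ a subsolution of \eqref{dHYM flow}; analogously for $\tilde h^\pm$. The parabolic maximum principle gives $h^-(x,t) \leq f_t(x) \leq h^+(x,t)$ on $[1,1+\delta] \times [0,T)$, and since $h^\pm(1,t) = q = f_t(1)$, comparing $x$-derivatives at $x=1$ yields $|f_t'(1)| \leq C_0 e^{Bt}$. The analogous argument at $x = a$ gives $|f_t'(a)| \leq C_0 e^{Bt}$, and adding produces the bound with $A = 2C_0$, $B = 2K(n-1)$.

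The main obstacle is the coordination in the barrier construction: the linear-in-$(x-1)$ vanishing of $u''$ at the boundary is exactly matched by the linear vanishing of $\dot h^\pm$ at $x=1$, and the exponential-in-$t$ growth $C_0 e^{Bt}$ is precisely the rate needed to absorb the coefficient $C_0 e^{Bt} - q$ appearing in $L(h^\pm)$. Slower $C(t)$-growth would violate the supersolution inequality, and faster degeneracy of $u''$ than linear would also fail; thus the stated bound $Ae^{Bt}$ reflects the sharp scaling of this linear-barrier approach.
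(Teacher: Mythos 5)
Your proof is correct and is essentially the paper's argument: a linear barrier $q + Ce^{Bt}(x-1)$ anchored at the boundary value, made into a super/subsolution by matching the exponential growth of the slope against the linear vanishing $u''\lesssim (x-1)$, then compared via the parabolic maximum principle and differentiated at $x=1$. The only cosmetic difference is that you localize to $[1,1+\delta]$ and invoke a preliminary $C^0$ bound to control the interior edge, whereas the paper runs the same barrier over all of $(1,a]$.
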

\begin{proof}
	We will   show  $|f'(1)|<C(T)$, as the other boundry point is treated similarly. Consider $g_t(x)=q+Ae^{B(n-1)t}(x-1)$. Choose $A\gg0$ sufficiently large to ensure both  $Ae^{B(n-1)t}\geq 2\max\{|q|,|q|^{-1}\}$ and $f_0<g_0$  for all $x\in (1,a]$. Choose $B\gg0$ so that $u''<B(x-1)$. We claim that $f_t<g_t$ for all time  $t\in [0,T)$.

	Suppose not, and assume the curves touch for  the first time at $x=x_0>1$ and  $t=t_0$. Then, $f_{t_0}(x_0)=g_{t_0}(x_0)$, $f'_{t_0}(x_0)=g'_{t_0}(x_0)$, $f''_{t_0}(x_0)\le g''_{t_0}(x_0)$, and $\dot f_{t_0}(x_0)\ge \dot g_{t_0}(x_0)$. Thus, when $x=x_0$, $t=t_0$ we have 
\begin{align*}
	\dot f&=u''\left(\frac{f''}{1+f'^2}+(n-1)\frac{x_0f'-f}{x_0^2+f^2}\right)\\
	&\le  B(x_0-1)\left(\frac{g''}{1+g'^2}+(n-1)\frac{x_0g'-g}{x_0^2+g^2}\right)\\
	&=B(x_0-1)(n-1)\frac{Ax_0e^{B(n-1)t_0}-q-Ae^{B(n-1)t_0}(x_0-1)}{x_0^2+(q+Ae^{B(n-1)t_0}(x_0-1))^2}\\
	&< ABe^{B(n-1)t}(x_0-1)(n-1) \frac{1-A^{-1}e^{-B(n-1)t_0}q}{1+q^2},
\end{align*}
	since $x_0^2+(q+Ae^{B(n-1)t_0}(x_0-1))^2>1+q^2$. Furthermore by assumption on $A$ we have	$-A^{-1}e^{-B(n-1)t_0}q\leq  \frac12q^2$, and so $$ \frac{1-A^{-1}e^{-B(n-1)t_0}q}{1+q^2}\leq1.$$
	Hence, 
	$$\dot f< ABe^{B(n-1)t_0}(x_0-1)(n-1)=\dot g,$$
a contradiction. Thus $g_t$ serves as a barrier giving an upper bound for the derivative $f'(1)\leq Ae^{B(n-1)t}$.  The lower bound is treated similarly. 
\end{proof}

We now turn to the case where we do have long time existence, namely when $n\geq 3$ and $\alpha_0$ has supercritical phase, i.e. $\Theta(\alpha_0)>(n-2)\frac\pi 2$. 
\begin{lemma}
The supercritical phase condition is preserved along the flow.
\end{lemma}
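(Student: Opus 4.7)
The plan is to derive a linear parabolic equation satisfied by $\Theta$ along the flow and apply the maximum principle, with special care at the boundary $x\in\{1,a\}$ where the equation degenerates. Writing $\tau(x):=u''(\rho(x))$ via the Legendre transform $x=u'(\rho)$, the flow \eqref{dHYM flow} reads $\dot f=\tau\Theta'$, so $\dot f'=\tau'\Theta'+\tau\Theta''$. Differentiating $\Theta=(n-1)\arctan(f/x)+\arctan(f')$ in $t$ and substituting yields
\begin{equation*}
\dot\Theta=\frac{\tau}{1+f'^2}\Theta''+\left(\frac{(n-1)x\tau}{x^2+f^2}+\frac{\tau'}{1+f'^2}\right)\Theta',
\end{equation*}
a linear parabolic equation with no zeroth-order term, degenerate precisely where $\tau=0$.

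In the interior, the standard perturbation argument (comparing with $\Theta+\epsilon t$) rules out a new minimum below the initial infimum: at such a point $\Theta'=0$ and $\Theta''\geq 0$, making the right-hand side non-negative and contradicting a strict time-decrease. Thus no interior minimum can form strictly below the initial value $\inf_x\Theta(x,0)$.

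The main work is at the endpoints, where $\tau$ vanishes but $\tau'$ does not. From the Calabi-symmetry expansions of Section~\ref{symmetry}, one checks $\tau'(1)>0$ and $\tau'(a)<0$ (indeed a direct computation from $x-1=e^\rho U_0'(e^\rho)$ gives $\tau'(1)=1$, and analogously $\tau'(a)=-1$). By Proposition~\ref{boundary}, the boundary slopes $f'(1,t)$ and $f'(a,t)$ stay finite on any finite interval, so the evolution equation reduces at the endpoints to
\begin{equation*}
\dot\Theta(1,t)=\frac{\tau'(1)}{1+f'(1,t)^2}\,\Theta'(1,t),\qquad \dot\Theta(a,t)=\frac{\tau'(a)}{1+f'(a,t)^2}\,\Theta'(a,t).
\end{equation*}
If the spatial minimum of $\Theta$ is attained at $x=1$, then the one-sided derivative $\Theta'(1,t)\geq 0$, and since $\tau'(1)>0$ we obtain $\dot\Theta(1,t)\geq 0$; the symmetric signs at $x=a$ give $\dot\Theta(a,t)\geq 0$. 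Combining the interior and boundary pieces, $\inf_{[1,a]}\Theta(\cdot,t)$ is non-decreasing, so the strict inequality $\Theta>(n-2)\pi/2$ is preserved.

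The main obstacle is the degeneracy of the equation at $x=1,a$: the interior maximum principle alone says nothing about boundary behavior, and one must verify that the first-order equation governing the endpoint values drives them back into the supercritical regime. This hinges on the sign of $\tau'$ at each endpoint matching the outward normal direction, which is ensured by the Calabi-symmetry ansatz.
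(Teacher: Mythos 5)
Your proof is correct, but it takes a genuinely different route from the paper's. The paper handles this lemma in one line by leaving the Calabi reduction entirely: by equation (5.4) of \cite{JY}, the angle satisfies the heat equation $\dot\Theta(\alpha_t)=\Delta_\eta\Theta(\alpha_t)$ for the metric $\eta_{\bar kj}=g_{\bar kj}+\alpha_{\bar kp}g^{\bar qp}\alpha_{\bar qj}$ on the \emph{closed} manifold $X$, so the minimum principle applies with no boundary to worry about --- your endpoints $x=1,a$ correspond to the divisors $E$ and $H$, which are interior to $X$. Your argument instead stays inside the one-dimensional reduction: you rederive the degenerate parabolic equation for $\Theta(x,t)$ on $[1,a]$ and compensate for the vanishing of $\tau=u''$ at the endpoints using the signs $\tau'(1)=1>0$ and $\tau'(a)=-1<0$, which the Calabi asymptotics do give. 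This is more work but is self-contained (no appeal to the general computation in \cite{JY}) and makes transparent why the coordinate degeneration is harmless. One step you should make explicit: passing to the endpoint equation $\dot\Theta(1,t)=\tau'(1)\Theta'(1,t)/(1+f'(1,t)^2)$ requires $\tau\,\Theta''\to 0$ as $x\to 1$, i.e.\ that $f_t$ is smooth in $x$ up to the closed interval; this holds because $x-1=rU_0'(r)$ is a smooth function of $r=e^{\rho}$ with nonzero derivative at $r=0$, so $r$, and hence $f=rV_0'(r)+q$, is smooth in $x$ near $x=1$ (and similarly at $x=a$). Also, Proposition \ref{boundary} is not really needed here, since $1/(1+f'^2)\le 1$ in any case; smoothness of the flow on its interval of existence is all you use.
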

\begin{proof}
On a general K\"ahler manifold $(X,\omega)$, set $\omega=ig_{\bar kj}dz^j\wedge d\bar z^k$ and $ \alpha=i\alpha_{\bar kj}dz^j\wedge d\bar z^k$. Consider the metric $\eta_{\bar kj}=g_{\bar kj}+\alpha_{\bar kp}g^{\bar qp}\alpha_{\bar qj}$. By equation (5.4) in \cite{JY},  the angle $\Theta(\alpha_t)$ evolves via the heat equation
\begin{equation}
\label{heat}
\dot\Theta(\alpha_t)=\Delta_{\eta} \Theta(\alpha_t), \end{equation}
 and thus  the result follows from the maximum principle. 
\end{proof}
\begin{lemma}
If $f_0(x)$ satisfies the supercritical phase assumption,   $f_t(x)>0$ for all $t\geq 0$.
\end{lemma}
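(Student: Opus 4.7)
The plan is to read off the positivity of $f_t$ directly from the supercritical phase, with essentially no additional analytic input beyond the preceding lemma. Recall that in the Calabi symmetry reduction the Lagrangian angle at a point is
$$\Theta(x,t) = (n-1)\arctan\!\left(\frac{f_t(x)}{x}\right) + \arctan(f_t'(x)),$$
and the preceding lemma tells us that $\Theta(x,t) > (n-2)\pi/2$ is preserved for all $t \geq 0$ and all $x \in [1,a]$.

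I would then use the universal strict bound $\arctan(f_t'(x)) < \pi/2$ to isolate the first term, yielding
$$(n-1)\arctan\!\left(\frac{f_t(x)}{x}\right) > (n-3)\frac{\pi}{2},$$
and hence
$$\arctan\!\left(\frac{f_t(x)}{x}\right) > \frac{(n-3)\pi}{2(n-1)}.$$
For $n \geq 3$ the right-hand side is non-negative, so monotonicity of $\arctan$ together with the fact that $x \geq 1 > 0$ immediately gives $f_t(x) > 0$ on $[1,a] \times [0,\infty)$. In particular the supercritical hypothesis at $t=0$ already forces the boundary values $p, q > 0$, which is consistent with the flow fixing $f_t(1) = q$ and $f_t(a) = p$ since $u''(1) = u''(a) = 0$.

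There is no real obstacle here: the argument is a purely algebraic consequence of the preservation statement, and the only things to check are that $\arctan$ strictly falls below $\pi/2$ and that $x > 0$. The same reasoning in fact yields the sharper bound $f_t(x)/x \geq \tan\!\big((n-3)\pi/(2(n-1))\big)$, which is strictly positive when $n > 3$; although only bare positivity is claimed in the statement, this quantitative version may well be useful when combining with the boundary estimate of Proposition \ref{boundary} to upgrade to the $C^2$ bounds required for Evans-Krylov in the long-time existence proof of Theorem \ref{main2}.
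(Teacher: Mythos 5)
Your argument is correct and is essentially the paper's proof in contrapositive form: the paper assumes $f_{t_0}(x_0)\leq 0$ and derives $\arctan(f')>(n-2)\frac{\pi}{2}\geq\frac{\pi}{2}$ for a contradiction, which is the same algebraic rearrangement of $\Theta>(n-2)\frac{\pi}{2}$ combined with $\arctan(f')<\frac{\pi}{2}$ that you use. Both rely on the preceding lemma that supercritical phase is preserved, so there is nothing to add.
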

\begin{proof}
Suppose there exists a time $t_0$ and a point $x_0$ where $f_{t_0}(x_0)\leq 0$. This implies arctan$\left(\frac f{x_0}\right)\leq0$. Yet  $\Theta(x_0):=(n-1)\arctan\left(\frac f{x_0}\right)+\arctan\left(f'\right),$ and so the super critical phase assumption implies
$$\arctan\left(f'\right)>(n-2)\frac\pi 2,$$
which is impossible for $n\geq 3$.
\end{proof}
\begin{lemma}
Under  the supercritical phase assumption there exists a uniform constant $C$ so that $f_t'(x)>-C$ for all $t\geq 0$.
\end{lemma}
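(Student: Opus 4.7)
The plan is to combine the previous lemma (positivity of $f_t$) with a uniform upper bound on $f_t$ and the preserved supercritical phase, and then convert a lower bound on $\arctan(f')$ into an honest lower bound for $f'$.

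First I would establish that $f_t(x)\leq M_0$ uniformly, where $M_0:=\sup_{[1,a]}f_0$ (strictly positive because $f_0>0$). The constant function $\bar f\equiv M_0$ should act as a supersolution of \eqref{dHYM flow}: at an interior point where $f_t$ first touches $\bar f$ from below, we have $f'=0$, $f''\leq 0$, and $f=M_0>0$, so
$$L(f)=u''\left(\frac{f''}{1+f'^2}-(n-1)\frac{M_0}{x^2+M_0^2}\right)<0$$
on $(1,a)$ since $u''>0$ there. Because the Dirichlet data satisfy $q\leq M_0$ and $p\leq M_0$, the maximum principle yields $f_t(x)\leq M_0$ for all $t\geq 0$ and $x\in[1,a]$.

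Next, \eqref{heat} shows that $\Theta(\alpha_t)$ satisfies a linear parabolic heat equation on the compact manifold $X$, so its spatial minimum is non-decreasing in $t$. By compactness together with the strict pointwise supercritical phase inequality, there exists $\epsilon_0>0$ with $\min_X\Theta(\alpha_0)\geq (n-2)\pi/2+\epsilon_0$, hence
$$\Theta(\alpha_t)\geq (n-2)\frac{\pi}{2}+\epsilon_0\quad\text{for all }t\geq 0.$$
Combining with $\Theta(x)=(n-1)\arctan(f/x)+\arctan(f')$ and the bound $0<f/x\leq M_0$ (using $x\geq 1$), I obtain
$$\arctan(f_t'(x))\geq (n-2)\frac{\pi}{2}+\epsilon_0-(n-1)\arctan(M_0).$$
Since $\arctan(M_0)<\pi/2$ strictly, the right-hand side equals $-\pi/2+\delta$ for some $\delta>0$ depending only on $M_0$, $\epsilon_0$, and $n$, which gives $f_t'(x)\geq -\cot(\delta)=:-C$ on $(1,a)$. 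The boundary points $x=1$ and $x=a$ are handled by continuity, together with Proposition \ref{boundary}.

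I expect the upper bound step to be the main obstacle. The parabolic operator degenerates at the endpoints since $u''(1)=u''(a)=0$, so one cannot rely on interior elliptic/parabolic Hopf-type arguments there, and without the positivity $f>0$ granted by the previous lemma the reaction term $(n-1)(xf'-f)/(x^2+f^2)$ in $L$ would fail to have a definite sign at a touching point, undermining the constant-supersolution argument. Thus it is essential that this lemma is proved \emph{after} the previous two lemmas (preservation of supercritical phase and positivity of $f_t$).
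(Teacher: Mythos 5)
Your proposal is correct and follows essentially the same route as the paper: preservation of supercritical phase via the heat equation \eqref{heat}, a uniform upper bound $f_t\leq C$ from the maximum principle (which the paper asserts and you spell out with the constant supersolution, using $f_t>0$ for the sign of the reaction term), and then the conversion $\arctan(f_t')>\Theta-(n-1)\arctan(f_t/x)\geq -\pi/2+\delta$ to get $f_t'\geq-\cot(\delta)$. The only cosmetic difference is that you carry the quantitative gap $\epsilon_0$ in the phase, whereas the paper extracts the needed slack solely from $\arctan(f_t/x)<\pi/2-\epsilon$; both yield the same uniform bound.
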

\begin{proof}
By the supercritical phase condition
$$ \arctan\left(f_t'\right)> (n-2)\frac\pi2 -(n-1)\arctan\left(\frac {f_t}x\right).$$
Since $x\geq 1$ and $f_t\leq C$ by the maximum principle, there exists an $\epsilon>0$ so that $\arctan\left(\frac {f_t}x\right)<\frac\pi2-\epsilon$. Thus 
$$ \arctan\left(f_t'\right)> -\frac\pi2+(n-1)\epsilon.$$
This gives a lower bound for $f'_t$.
\end{proof}

\begin{proposition}
\label{C1longtime}
	Under  the supercritical phase assumption, a solution $f_t(x)$ to \eqref{dHYM flow} has bounded first derivative for all times $T<\infty$. In particular, there exists uniform constants $A , B $ so that 
	$$\sup_{x\in[1,a]}|f'_t(x)|\leq A(1+t)e^{Bt}.$$
\end{proposition}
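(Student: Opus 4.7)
The strategy is to bound $\sup_{x}f'_t(x)$ from above by the maximum principle using a suitable time-dependent barrier, since the uniform lower bound $f'_t>-C$ is already in hand from the preceding lemma. As a preliminary I would establish a $C^0$ bound on $f$ itself: at any interior spatial maximum with $f>0$ one has $f'=0$ and $f''\leq 0$, giving $L(f)\leq-(n-1)u''\frac{f}{x^2+f^2}\leq 0$, while the boundary values $f(1,t)=q$ and $f(a,t)=p$ are preserved by the flow. Combined with $f_t>0$, this yields $0\leq f_t\leq C$ uniformly in time.

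Next, differentiating \eqref{dHYM flow} in $x$ and abbreviating $U(x):=u''(\rho(x))$ gives $\dot{f'}=U'(x)\Theta'+U(x)\Theta''$. At any point where $f''=0$ a direct calculation produces
\begin{align*}
\dot{f'}=U'(n-1)\frac{xf'-f}{x^2+f^2}+U\,\frac{f'''}{1+(f')^2}-U(n-1)\frac{2(xf'-f)(x+ff')}{(x^2+f^2)^2}.
\end{align*}
At an interior spatial maximum $x_0$ of $f'(\cdot,t)$ with $M:=f'(x_0,t)>C$, one has $f''(x_0)=0$ and $f'''(x_0)\leq 0$, and the bounds $0\leq f\leq C$ together with $x_0\geq 1$ make both $xf'-f$ and $x+ff'$ strictly positive, so the last two terms above are nonpositive and may be dropped. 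Using that $U$ extends smoothly to $[1,a]$ with bounded derivative (which follows from the Calabi-symmetry asymptotics of $u$), what remains is estimated by $\dot{f'}(x_0,t)\leq C_1 M+C_2$ with constants $C_1,C_2$ independent of $t$.

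To convert this interior estimate into a global bound I would use the barrier $\phi(t):=(K+K't)e^{Bt}$, selecting $B>C_1$ and $K'\geq C_2$ so that $\dot\phi>C_1\phi+C_2$, and $K$ large enough that $\phi(0)\geq\sup_x f'_0(x)$ and $\phi(t)\geq Ae^{Bt}$ dominates the spatial-boundary bound from Proposition \ref{boundary}. If $f'$ first touches $\phi$ at some interior point $(x_0,t_0)$, then at that point $f''(x_0)=0$, $f'''(x_0)\leq 0$, and $M=\phi(t_0)>K>C$, so the interior inequality gives $\dot{f'}(x_0,t_0)\leq C_1\phi(t_0)+C_2<\dot\phi(t_0)$, contradicting $\partial_t(f'-\phi)\geq 0$ at a first zero from below. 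Hence $f'_t\leq\phi(t)$ on $[1,a]\times[0,T)$, and combining with $f'_t>-C$ yields the stated bound $|f'_t|\leq A(1+t)e^{Bt}$.

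The step that requires the most care is the sign analysis at the interior maximum: the lower bound $f\geq 0$ (which in turn used supercritical phase and $n\geq 3$) is what ensures $x+ff'\geq x\geq 1>0$ regardless of the sign of $f'$, so the last term in the displayed formula is guaranteed to be nonpositive once $xf'-f>0$, which holds as soon as $M>C$. Without this lower bound the term could change sign and produce a genuinely quadratic-in-$M$ right-hand side, which would admit finite-time blowup; it is therefore essential to invoke the positivity lemma before attempting the barrier argument.
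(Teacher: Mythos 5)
Your proposal is correct and follows essentially the same route as the paper: differentiate the equation, evaluate at an interior spatial maximum of $f'$ (where $f''=0$, $f'''\le 0$), use the positivity and boundedness of $f$ from the preceding lemmas to kill the potentially quadratic term $-2x_0f(f')^2$ and reduce to $\dot f'\le C_1f'+C_2$, and then close with a time-dependent exponential barrier together with the boundary estimate of Proposition \ref{boundary}. The only cosmetic difference is the form of the barrier ($(K+K't)e^{Bt}$ versus the paper's $A^{-1}e^{-Bt}f'-Ct$), which yields the same $A(1+t)e^{Bt}$ bound.
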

\begin{proof}
By the previous lemma we only need an upper bound for $f_t'$. By   Proposition \ref{boundary} we have    $$A ^{-1}e^{-B t}\left(|f_t'(1)|+ |f_t'(a)|\right)\leq 1.$$ As a result if  $\sup_{x\in[1,a]}A^{-1}e^{-Bt}|f'_t(x)|$ is large, this supremum must be achieved at an interior point.  Let $x_0$ be the  interior max. At this point we  have $f'_t(x_0)>0$, $f''_t(x_0)=0$, and  $f'''_t(x_0)\leq 0$. By direct computation  at $x_0$ it holds 
\begin{align*}
	\dot f'&=\frac{d}{dx}\left(u''\left(\frac{f''}{1+f'^2}+(n-1)\frac{xf'-f}{x+f^2}\right)\right)\\
	&\le  \frac{du''}{dx} (n-1)\frac{x_0f'-f}{x_0^2+f^2} + u''\frac {d}{dx}\left(\frac{f''}{1+f'^2}+(n-1)\frac{xf'-f}{x^2+f^2}\right)\\\
	&\leq Cf'+u'' \left(\frac{f'''}{1+f'^2} -(n-1)  \frac{2(x_0f'-f)(x_0+ff') }{(x_0^2+f^2)^2}\right),
\end{align*}
where we repeatedly plugged in that $f''(x_0)=0$. Since $f$ is positive the term $-2x_0f(f')^2$ is negative, and thus
$$\dot f'\leq Cf'+u''2(n-1)  \frac{fx_0+f^2 f'-x_0^2 f' }{(x_0^2+f^2)^2} \leq Cf'+C$$
for some constant $C.$

Now, consider the function $A ^{-1}e^{-B t}f'_t(x)-Ct$. By making $B$ larger, if necessary, we can assume $B \geq C$. At an interior maximum we see
$$\frac{d}{dt}\left(A ^{-1}e^{-B t}f'-Ct\right)\leq 0,$$
from which the result follows. 
\end{proof}

We remark that the above proof fails when the function $f$ is not positive, since then the term $-2x_0f(f')^2$ is positive. Thus the best inequality one can derive in this case is $\dot f'\leq C f'^2$, which is certainly not enough to prevent a finite time singularity, as we have demonstrated.  We are now ready to prove our second main result.

\begin{proof}[Proof of Theorem \ref{main2}] Let  $\alpha_t:=\alpha_0+i\partial\bar\partial \phi_t$,  be the solution to \eqref{lbmcf} starting at $\alpha_0$, and assume the flow is defined for $t\in[0,T)$ for some time $T<\infty$. By proposition \ref{C1longtime}, all the eigenvalues of $\omega^{-1}\alpha_t$ are bounded uniformly by a constant $C_T$. From here the result follows from the argument outlined  in Proposition 5.2 in \cite{JY}. 

The idea is that  once the eigenvalues are bounded, the operator $\Delta_\eta$ is  uniformly elliptic. Given $\Theta(\alpha_t)$ solves the heat equation \eqref{heat}, the  parabolic estimates of Krylov-Safonov  (\cite{Kr} Theorem 11, Section 4.2) imply $\Theta(\alpha_t)$ is in $C^\alpha$ in time which gives $\phi_t$ is uniformly bounded in $C^{1,\alpha}$ in time. Now, the uniform eigenvalue bounds also imply $\phi_t$ has bounded $C^2$ norm. The supercritical phase assumption implies the operator $\Theta(\cdot)$ has convex level sets, which allows us to apply Evans-Krylov theory (see Section 6 of \cite{CJY}). This gives uniform $C^{2,\alpha}$ bounds for $\phi_t$ which can be bootstrapped to higher order estimates. Thus we get smooth convergence $\phi_t\rightarrow\phi_T$ to some limit, which allows us to continue the flow past the time $T$.
\end{proof}

\section{Singular behavior at $t=\infty$}

\label{longsing}

We now construct an example where the line bundle mean curvature flow   develops a singularity at infinite time along a destabilizing subvariety. Recall from Section \ref{symmetry} that if one assumes Calabi-symmetry at an initial time, then \eqref{lbmcf} can be reformulated  as a flow of curves \eqref{mCSF}. As a first step, we construct a family of subsolutions to \eqref{mCSF} in polar coordinates that converges to a   stationary solution $\gamma_\infty$. By \cite{JS}, we know such a solution must lie on a level set of the harmonic polynomial ${\rm Im}(e^{-i\hat\theta}z^n)$. Write $\gamma_\infty(\theta)=(x_\infty(\theta),y_\infty(\theta))=(r_\infty(\theta)\cos\theta,r_\infty(\theta)\sin\theta)$, with $\theta\in[\theta_{\min},\theta_{\max}]$. We also assume
 \begin{align}\label{region}
 	1\le x_\infty(\theta)\le a\quad\textnormal{and}\quad x_\infty(\theta_{\min})=x_\infty(\theta_{\max})=a.
 \end{align}
This leads to the following result.

\begin{proposition}\label{subsolution}
	Under the assumptions
	\begin{equation}
	\label{thing49}
	r_\infty'\ge 0\quad \textit{and}\quad\frac{r_\infty'}{r_\infty}\le 2\tan\theta,
	\end{equation}
	 there exists a subsolution $\gamma_t(\theta)=(r_t(\theta)\cos\theta,r_t(\theta)\sin\theta)$  to \eqref{mCSF} such that $\gamma_t\to \gamma_\infty$ uniformly as $t\to \infty$. 
\end{proposition}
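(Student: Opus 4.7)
The plan is to build $\gamma_t$ as an explicit one-parameter deformation of $\gamma_\infty$ in polar coordinates, with the perturbation shrinking to zero as $t\to\infty$. My first instinct is the multiplicative ansatz $r_t(\theta)=\phi(t)\,r_\infty(\theta)$ with $\phi(t)\to 1$ monotonically. By \cite{JS}, $\gamma_\infty$ lies on a level set $\{\mathrm{Im}(e^{-i\hat\theta}z^n)=c\}$, and a direct computation shows that in polar form this level-set condition is equivalent to the stationarity identity $nr_\infty^2+(n+1)r_\infty'^2-r_\infty r_\infty''=0$. Crucially, this identity is invariant under the rescaling $r_\infty\mapsto\phi\,r_\infty$ for any constant $\phi$, so $\kappa(\gamma_t)+(n-1)\xi(\gamma_t)\equiv 0$ along the entire scaling family and consequently $L(\gamma_t)\equiv 0$. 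The subsolution inequality then collapses to a sign condition on the normal component of $\dot\gamma_t$, which we can arrange with a judicious choice of $\phi(t)$.

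Concretely, converting \eqref{mCSF} to polar coordinates via $\xi=d\theta/ds=1/\sqrt{r^2+r'^2}$ together with the classical formula $\kappa=(r^2+2r'^2-rr'')/(r^2+r'^2)^{3/2}$ yields
\begin{equation*}
L(\gamma)=\frac{u''(r\cos\theta)\bigl[nr^2+(n+1)r'^2-rr''\bigr]}{(r^2+r'^2)^{3/2}}.
\end{equation*}
For $r_t=\phi(t)r_\infty$ the bracket vanishes. The radial velocity is $\dot\gamma_t=\dot\phi\,r_\infty\hat r$, and pairing with the $\theta$-induced normal $\mathbf{N}=e^{i\pi/2}\gamma_t'/|\gamma_t'|$ gives $\dot\gamma_t\cdot\mathbf{N}=-\dot\phi\,r_\infty^2/\sqrt{r_\infty^2+r_\infty'^2}$, up to the global orientation convention. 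Picking $\phi$ monotone with the correct sign of $\dot\phi$ and with $\phi(t)\to 1$ then yields both the subsolution property and uniform convergence $\gamma_t\to\gamma_\infty$.

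The two hypotheses $r_\infty'\ge 0$ and $r_\infty'/r_\infty\le 2\tan\theta$ enter when one checks that this construction produces a valid family of curves inside the admissible region $\{1\le x\le a\}$ sweeping $\gamma_\infty$ from one side. A scaled curve no longer satisfies the boundary condition $x_t(\theta_{\min})=x_t(\theta_{\max})=a$, so $\gamma_t$ must be restricted to the maximal subinterval of $[\theta_{\min},\theta_{\max}]$ on which $\phi(t)r_\infty(\theta)\cos\theta$ stays in $[1,a]$. Condition $r_\infty'\ge 0$ gives monotonicity of $r_\infty$ and hence a connected truncation interval, while the equivalent form $(r_\infty\cos^2\theta)'\le 0$ of the second assumption controls the $\theta$-dependence of $x_t(\theta)$ and the orientation of $\mathbf{N}$, so that $\gamma_t$ approaches $\gamma_\infty$ monotonically from the single expected side as $\phi(t)\to 1$.

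The main obstacle I anticipate is that the pure scaling ansatz may be too rigid, in which case I would refine it to $r_t=r_\infty-\epsilon(t)\psi(\theta)$ with $\psi\ge 0$ vanishing at the endpoints. Expanding $L(\gamma_t)$ to leading order in $\epsilon$ and substituting the stationarity relation $r_\infty''=nr_\infty+(n+1)r_\infty'^2/r_\infty$ reduces the subsolution inequality to a linear inequality in $\psi,\psi',\psi''$ with coefficients built out of $r_\infty$ and $r_\infty'$; the delicate point is verifying that this inequality has the correct sign uniformly across $[\theta_{\min},\theta_{\max}]$, and it is here that the precise hypotheses $r_\infty'\ge 0$ and $r_\infty'/r_\infty\le 2\tan\theta$ are indispensable, in particular to handle the degeneracy of $u''$ at the boundary. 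Once that sign is controlled, any $\epsilon(t)\searrow 0$ decaying fast enough to dominate the defect (e.g.\ $\epsilon(t)=e^{-ct}$) completes the construction and gives uniform convergence $\gamma_t\to\gamma_\infty$.
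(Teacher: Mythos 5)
Your scale-invariance observation is correct as far as the differential inequality goes: the stationarity relation $nr_\infty^2+(n+1)r_\infty'^2-r_\infty r_\infty''=0$ is homogeneous, so $\kappa+(n-1)\xi$ vanishes identically on the dilated curves and the subsolution condition reduces to $\dot\phi\ge 0$. The construction nevertheless fails for geometric reasons that your proposal only gestures at. Since $x_\infty(\theta_0)=1$, the curve $\phi\,\gamma_\infty$ with $\phi<1$ has $x$-coordinate $\phi<1$ at $\theta_0$ and exits the strip $\{1\le x\le a\}$ precisely in the neighborhood of $(1,q)$ where the singularity is supposed to form and where the barrier is most needed; symmetrically, it never reaches $\{x=a\}$, so after truncation the barrier has free endpoints in the interior of $D$ (e.g.\ near $(\phi a,\phi p)$). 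At a free interior endpoint the avoidance principle gives no information, and the solution $f_t$, pinned at $(1,q)$ and $(a,p)$, can slip around the barrier there. The paper's ansatz \eqref{thing50} is engineered exactly to avoid this: one checks from \eqref{thing50} that every curve of the family passes through the two fixed points $\gamma_\infty(\theta_{\min})$ and $\gamma_\infty(\theta_{\max})$ on $\{x=a\}$, satisfies $r_t\ge r_\infty$ and $1\le x_t\le a$ (it interpolates between the line $x=a$ and $\gamma_\infty$ from the side actually occupied by $f_t$), and keeps a controlled distance $C(b)>0$ from $\{x=1\}$ where $u''$ degenerates.

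You have also misattributed the role of the hypotheses \eqref{thing49}. They are not about connectedness of a truncation interval or the orientation of $\mathbf N$; in the paper they are what make the term $\frac{br_\infty'\sin(2\theta)}{a^2r_\infty^3}-\frac{br_\infty'^2\cos^2\theta}{a^2r_\infty^4}$ nonnegative, which yields the strictly positive lower bound $\frac{2r'^2-rr''+r^2}{r'^2+r^2}+(n-1)\ge C_1/b$ for the interpolating family. That positive curvature defect is indispensable because the paper's family has $\dot r\le 0$, so the term $r\dot r/u''$ is negative and must be absorbed — and it is potentially large where $u''$ degenerates at $x=1$ and $x=a$, which is handled via the transversality estimate \eqref{C3}, the bound $u''\ge C(b)$, and the resulting ODE for $b(t)$. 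In your pure-scaling ansatz the defect is identically zero, so there is no slack and the hypotheses play no role — a sign the ansatz is too rigid to be the right one. Your fallback $r_t=r_\infty-\epsilon(t)\psi(\theta)$ restores endpoint pinning but defers precisely the two hard points (the sign of the linearized defect, and the competition between the $O(\dot\epsilon)$ normal speed and the vanishing of $u''$ at the ends and near $(1,q)$), and with $\psi\ge 0$ it again pushes the curve to $x\le x_\infty$, i.e.\ out of the domain at $\theta_0$ unless $\psi(\theta_0)=0$. As written, the proposal does not constitute a proof.
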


\begin{proof}
We first write down (\ref{mCSF}) in polar coordinates. Note that $\dot \gamma=(\dot r\cos\theta,\dot r\sin\theta)$, with the normal vector to $\gamma$ given by
 $$\textbf N =\frac1{(r'^2+r^2)^{1/2}}(-r'\sin\theta-r\cos\theta,r'\cos\theta-r\sin\theta).$$
 Thus $\langle \dot\gamma,{\textbf N}\rangle=-\frac{\dot r r}{(r'^2+r^2)^{1/2}}$. In this case the extrinsic quantity $\xi$  is simply $ \xi=\frac{d}{ds}\theta=\frac{1}{(r'^2+r^2)^{1/2}}$. The curvature of a plane curve in polar coordinates is given by $\kappa=\frac{2r'^2-rr''+r^2}{(r'^2+r^2)^{\frac32}}$. Hence taking the dot product of \eqref{mCSF} with $\textbf N$ we arrive at
$$\dot rr=-u''\left(\frac{2r'^2-rr''+r^2}{r'^2+r^2}+(n-1)\right).$$
Because $\gamma_\infty$ is stationary, we see (\ref{mCSF}) is equivalent to
\begin{align}\label{curvature}
	\frac{2r_\infty'^2-r_\infty r_\infty''+r_\infty^2}{r_\infty'^2+r_\infty^2}+(n-1)=0.
\end{align}

Now, let $b=b(t):[0,\infty)\to \mathbb R$ be an increasing function to be determined later. We use $b(t)$ to define   $r_t(\theta)$ by
\begin{equation}
\label{thing50}
\frac1{r_t^2(\theta)}=\frac1{1+b}\left(\frac{b}{r_\infty^2(\theta)}+\frac{\cos^2\theta}{a^2}\right).
\end{equation}
For an appropriate choice of $b(t)$, we will show that the family of curves  $\gamma_t(\theta)=(r_t(\theta)\cos\theta,r_t(\theta)\sin\theta)$, which form an interpolation   between $\gamma_0$ and $\gamma_\infty$, gives a subsolution to \eqref{mCSF}. 

\begin{figure}[h!]
\label{figure 3}
  \includegraphics[scale=0.5]{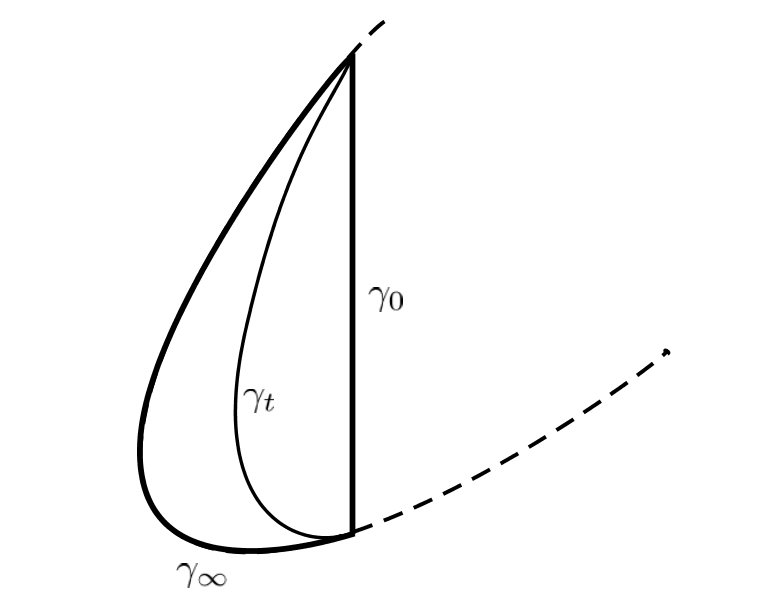}
 
  \caption{$\gamma_t$ being the interpolation between $\gamma_0$ and $\gamma_\infty$. }
\end{figure}

Differentiating \eqref{thing50} with respect to $\theta$, and suppressing dependence on $t$ and $\theta$ from our notation for simplicity, we have
$$\frac{r'}{r^3}=\frac{1}{1+b}\left(\frac{br_\infty'}{r_\infty^3}+\frac{\sin(2\theta)}{2a^2}\right)$$
as well as
$$\frac{r''}{r^3}-\frac{3r'^2}{r^4}=\frac{1}{1+b}\left(\frac{br_\infty''}{r_\infty^3}-\frac{3br_\infty'^2}{r_\infty^4}+\frac{\cos(2\theta)}{a^2}\right).
$$
So,
\begin{align*}
	\frac{2r'^2-rr''+r^2}{r^4}&=-\left(\frac{r''}{r^3}-\frac{3r'^2}{r^4}\right)+\frac{1}{r^2}-\left(\frac{r'}{r^3}\right)^2r^2\\
	&=\frac{1}{1+b}\left(-\frac{br_\infty''}{r_\infty^3}+\frac{3br_\infty'^2}{r_\infty^4}-\frac{\cos(2\theta)}{a^2}+\frac{b}{r_\infty^2}+\frac{\cos^2\theta}{a^2}\right.\\
	&\qquad\qquad\left.-\left(\frac{br_\infty'}{r_\infty^3}+\frac{\sin(2\theta)}{2a^2}\right)^2\left(\frac{b}{r_\infty^2}+\frac{\cos^2\theta}{a^2}\right)^{-1}\right).
\end{align*}
By (\ref{curvature}),
$$-\frac{br_\infty''}{r_\infty^3}+\frac{3br_\infty'^2}{r_\infty^4}+\frac{b}{r_\infty^2}=\frac{-b}{r_\infty^4}\left((n-1)(r_\infty'^2+r_\infty^2)-r_\infty'^2\right).$$
Now, for notational simplicity, set
$$A=\frac{br_\infty'}{r_\infty^3}+\frac{\sin(2\theta)}{2a^2},\;\;\;B=\frac{b}{r_\infty^2}+\frac{\cos^2\theta}{a^2}.$$
Then returning to the above we see
\begin{align*}
	\frac{2r'^2-rr''+r^2}{r'^2+r^2}&=\frac{1}{r^2}\frac{2r'^2-rr''+r^2}{r^4}\left(\left(\frac{r'}{r^3}\right)^2+\left(\frac{1}{r^2}\right)^2\right)^{-1}\\
	&=\frac{B}{A^2+B^2}\left(\frac{-b}{r_\infty^4}\left((n-1)(r_\infty'^2+r_\infty^2)-r_\infty'^2\right)+\frac{\sin^2\theta}{a^2}-\frac{A^2}{B}\right).
\end{align*}
Hence
\begin{align*}
	\frac{2r'^2-rr''+r^2}{r'^2+r^2}+(n-1)&=\frac{1}{A^2+B^2}\left(-(n-1)\frac{Bb}{r_\infty^2}-(n-2)\frac{Bbr_\infty'^2}{r_\infty^4}\right.\\
	&\quad\quad \left.+\frac{B\sin^2\theta}{a^2}+(n-2)A^2+(n-1)B^2\right).
\end{align*}

We now compute
$$-(n-1)\frac{Bb}{r_\infty^2}+(n-1)B^2=(n-1)B\left(B-\frac{b}{r_\infty^2}\right)=(n-1)B\frac{\cos^2\theta}{a^2},$$
and
$$A^2-\frac{Bbr_\infty'^2}{r_\infty^4}=\frac{br_\infty'\sin(2\theta)}{ a^2r_\infty^3}+\frac{\sin^2(2\theta)}{4a^4}-\frac{br_\infty'^2\cos^2\theta}{ a^2r_\infty^4}.$$
Combining these, we have
\begin{align*}
	\frac{2r'^2-rr''+r^2}{r'^2+r^2}+(n-1)&= \frac{(n-1)B\cos^2\theta+B\sin^2\theta}{a^2(A^2+B^2)}+\frac{(n-2)\sin^2(2\theta)}{4a^4(A^2+B^2)}\\
	&+\frac{n-2}{A^2+B^2}\left(\frac{br_\infty'\sin(2\theta)}{ a^2r_\infty^3}-\frac{br_\infty'^2\cos^2\theta}{ a^2r_\infty^4}\right).
\end{align*}
By assumption, $$r_\infty'\ge 0\quad \textnormal{and}\quad\frac{r_\infty'}{r_\infty}\le 2\tan \theta,$$
which implies
$$\frac{br_\infty'\sin(2\theta)}{ a^2r_\infty^3}-\frac{br_\infty'^2\cos^2\theta}{ a^2r_\infty^4}\ge 0.$$
Additionally, $r_\infty,$ $\sin\theta$, and $\cos\theta$, are all bounded above and below away from zero. This implies there exists a constant $C_1$ so that, for large $b$, 
$$\frac{2r'^2-rr''+r^2}{r'^2+r^2}+(n-1)\ge \frac{C_1}b.$$

Returning to \eqref{thing50}, we take the derivative of both sides in $t$
$$-\frac{2\dot r}{r^3}=-\frac{\dot b}{(1+b)^2}\left(\frac b{r_\infty^2}+\frac{\cos^2\theta}{a^2}-\frac{1+b}{r_\infty^2}\right)=-\frac{\dot b}{(1+b)^2}\left( \frac{\cos^2\theta}{a^2}-\frac{1}{r_\infty^2}\right).$$
Multiplying by $-r^4$ and plugging in the square of \eqref{thing50} for $r^4$ gives
 \begin{align*}
	2r\dot r&=\left(\frac{\cos^2\theta}{a^2}-\frac{1}{r_\infty^2}\right)\left(\frac{b}{r_\infty^2}+\frac{\cos^2\theta}{a^2}\right)^{-2}\dot b\\
	&= (r_\infty x-ra)\left(\frac{r_\infty x+ra}{a^2r^2r_\infty^2}\right)\left(\frac{b}{r_\infty^2}+\frac{\cos^2\theta}{a^2}\right)^{-2}\dot b\\
	&\ge (r_\infty x-ra)\frac{C_2}{b^2} \dot b
\end{align*}
for some $C_2>0$ whenever $b$ is large. Note that the polar curves $r(\theta)$ intersect  the line $x=a$ to the zeroth order, which implies there exists a constant $C_3>0$ for which \begin{align}\label{C3}
	0\ge \inf_{x\in [a-\epsilon,a]} \left(u''^{-1}(r_\infty x-ra)\right)+\inf_{x\in[1,a]}(r_\infty x-ra)\ge -C_3.
\end{align}

Next, we use the same assumption on the background K\"ahler form as Section \ref{finite}, namely, for $x\in [1,a-\epsilon]$ we assume $u''(x)\ge k(x-1).$ This implies
\begin{align*}
	u''&\ge k(r\cos\theta -1)\\
	&=k\left(\sqrt{(1+b)\left(\frac{b}{r_\infty^2}+\frac{\cos^2\theta}{a}\right)^{-1}}\cos\theta-1\right)\\
	&=k\left(\sqrt{(1+b)\left(\frac{b}{(r_\infty\cos\theta)^2}+\frac{1}{a}\right)^{-1}}-1\right)\\
	&\ge k\left(\sqrt{(1+b)\left(b+\frac{1}{a}\right)^{-1}}-1\right).
\end{align*}
For simplicity, write the right hand side above as $C(b)$, which is a smooth positive function approaching $0$ as $b\to \infty$. Combining with (\ref{C3}) we arrive at, 
\begin{align*}
	\frac2{u''}r\dot r\ge -\frac{C_2C_3}{b^2}\left(1+\frac 1{C(b)}\right)\dot b.
\end{align*}
If $b$ solves the initial value problem
$$\dot b=2\left(1+\frac{1}{C(b)}\right)^{-1}\frac{C_1}{C_2C_3}b;\qquad b_0\gg 0,$$
then $r_t(\theta)$ defines a subsolution:
\begin{align*}
	&\frac{1}{u''}r\dot r+\left(\frac{2r'^2-rr''+r^2}{r'^2+r^2}+(n-1)\right)\ge 0.
	\end{align*}
Notice that we require $ b_0\gg 0$. Thus the subsolution does not start at $\gamma_0$ (given by the vertical line in Figure 3), but rather a curve starting closer to $\gamma_\infty$ in the interpolation. It then sweeps out to $\gamma_\infty$ as $t\rightarrow\infty$. 
\end{proof}

We now show that the assumptions on $r_\infty$ in Proposition \ref{subsolution} can be satisfied with an explicit example. As we have stated above, in  \cite{JS} it was demonstrated that under the Calabi-Symmetry assumption, solutions to the dHYM equation correspond to functions $f:[1,a]\rightarrow \mathbb R$, satisfying the boundary conditions $f(1)=q$, $f(a)=p$, so that the graph $(x, f(x))$ lies on a level curve of ${\rm Im}(e^{-i\hat\theta}z^n)$. Furthermore, the proof of Theorem 1 from \cite{JS} uses that if the level curve through $(1,q)$ has vertical slope, then the  class $[\alpha]$ is semi-stable with respect to the stability condition \eqref{stability1}, with the exceptional divisor $E$ being the destabilizing subvariety. Thus in this case any graph $f_\infty(x)$ lying on the level curve is singular with unbounded derivative at $(1,q)$, and by construction the corresponding representative of $[\alpha]$ will be singular precisely along $E$. It is this singular graph that will be the limiting curve to the line bundle mean curvature flow. 
\begin{lemma}
	There exists a K\"ahler class $[\omega]$ and a semi-stable class $[\alpha]$ with a stationary solution $\gamma_\infty$  which satisfies $\gamma_\infty(\theta_0)=(1,q)$, $\gamma_\infty(\theta_{\max})=(a,p)$, and where the corresponding polar function $r_\infty$ satisfies the assumptions of Proposition \ref{subsolution}.	\end{lemma}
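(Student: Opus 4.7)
The approach is to realize $\gamma_\infty$ explicitly as a level curve of $\textnormal{Im}(e^{-i\hat\theta}z^n)$ written in polar form, and then tune the parameters $\hat\theta$, $c$, and the endpoint angle so that the conditions \eqref{thing49} hold throughout the piece running from $(1,q)$ to $(a,p)$. First I would parametrize: the level curve $r^n\sin(n\theta-\hat\theta)=c$ with $c>0$ becomes
$$r_\infty(\theta)=\left(\frac{c}{\sin(n\theta-\hat\theta)}\right)^{1/n},\qquad n\theta-\hat\theta\in(0,\pi),$$
and logarithmic differentiation gives the clean identity $r_\infty'/r_\infty=-\cot(n\theta-\hat\theta)$. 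The vertical slope at $\gamma_\infty(\theta_0)=(1,q)$, where $\theta_0=\arctan q$, amounts to $r_\infty'(\theta_0)/r_\infty(\theta_0)=\tan\theta_0$; by the identity above this forces $n\theta_0-\hat\theta=\pi/2+\theta_0$, i.e.\ $\hat\theta=(n-1)\theta_0-\pi/2$, and $c=\sec^{n-1}\theta_0$ is then fixed by $r_\infty(\theta_0)=\sec\theta_0$. As recalled in the paragraph preceding the lemma, the proof of Theorem 1 of \cite{JS} shows that this vertical slope at $(1,q)$ is exactly what makes $[\alpha]$ semi-stable with the exceptional divisor $E$ as destabilizer.

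Next I would verify the two inequalities in \eqref{thing49}. For every $\theta\ge\theta_0$ we have $n\theta-\hat\theta\ge\pi/2+\theta_0>\pi/2$, hence $\cot(n\theta-\hat\theta)\le 0$ and $r_\infty'\ge 0$ automatically on the entire range $[\theta_0,\theta_{\max}]$, as long as $n\theta_{\max}-\hat\theta<\pi$. For the second inequality $-\cot(n\theta-\hat\theta)\le 2\tan\theta$, at $\theta=\theta_0$ the two sides equal $\tan\theta_0$ and $2\tan\theta_0$ respectively, so it is strict there. Both sides are continuous and increasing on the admissible interval; the left side diverges as $n\theta-\hat\theta\to\pi$ while the right stays bounded on any compact sub-interval, so there is a first angle $\theta^*>\theta_0$ at which equality occurs, and the inequality is preserved on the entire closed sub-interval $[\theta_0,\theta^*]$.

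Finally I would pick any $\theta_{\max}\in(\theta_0,\theta^*)$ and set $a:=r_\infty(\theta_{\max})\cos\theta_{\max}$ and $p:=r_\infty(\theta_{\max})\sin\theta_{\max}$. A direct computation shows that $x_\infty(\theta)=r_\infty(\theta)\cos\theta$ has a strict local minimum of $1$ at $\theta_0$ with $x_\infty''(\theta_0)=(n-1)\sec^2\theta_0>0$, so $a>1$ for $\theta_{\max}$ sufficiently close to $\theta_0$; this yields an admissible K\"ahler class $[\omega]=a[H]-[E]$, and $[\alpha]=p[H]-q[E]$ is the desired semi-stable class. The main obstacle is checking that the window $(\theta_0,\theta^*)$ contains angles yielding $a$ in the K\"ahler cone, which reduces to a transcendental inequality verifiable in concrete examples: taking $n=3$ and $\theta_0=\pi/4$ (so $q=1$, $\hat\theta=0$, $c=2$), the critical angle $\theta^*$ satisfies $\cos(4\theta^*)=3\cos(2\theta^*)$, giving $\theta^*\approx 0.927$, well above $\pi/4$, and producing a concrete pair of classes realizing the lemma.
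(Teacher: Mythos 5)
Your proposal is correct and follows essentially the same route as the paper: choose a level curve of ${\rm Im}(e^{-i\hat\theta}z^n)$ with vertical tangent at $(1,q)$ to obtain semi-stability, observe that verticality forces $r_\infty'/r_\infty=\tan\theta_0$ so both inequalities in \eqref{thing49} hold strictly at $\theta_0$ and hence on a neighborhood, and use $x_\infty''(\theta_0)>0$ (via the stationarity equation) to choose $a$ slightly greater than $1$; your explicit formula $r_\infty'/r_\infty=-\cot(n\theta-\hat\theta)$ is just a concrete form of the paper's identity $r_\infty'/r_\infty=\tan(\pi/2-\beta+\theta)$. The one point to add is that \eqref{thing49} must also hold on $[\theta_{\min},\theta_0]$, not only for $\theta\ge\theta_0$; this follows from the same strictness at $\theta_0$ once $a$ (hence $\theta_{\min}$) is taken close enough to $\theta_0$.
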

\begin{proof}

Choose  $\gamma_\infty$ lying on a level curve of ${\rm Im}(e^{-i\hat\theta}z^n)$ so that  $\gamma_\infty(\theta_0)=(1,q)$ and  $\gamma_\infty'(\theta_0)$ is vertical, for some $\theta_0$. This guarantees we are working with a semi-stable class. The corresponding polar function $r_\infty(\theta)$ will now satisfy  (\ref{curvature}). Define $\beta$ by
	\begin{align*}
		\tan \beta:=\frac{y'(\theta)}{x'(\theta)}&=\frac{r_\infty'\sin\theta+r_\infty\cos\theta}{r_\infty'\cos\theta-r_\infty\sin\theta}=\frac{r_\infty'r_\infty^{-1}\tan\theta+1}{r_\infty'r_\infty^{-1}-\tan\theta}.
	\end{align*}
As a result
	$$\frac{r_\infty'}{r_\infty}=\cot(\beta-\theta)=\tan(\pi/2-\beta+\theta).$$
	Now, choose $q\gg 0$. Because $\gamma_\infty'(\theta_0)$ is vertical, we know $\beta(\theta_0)=\pi/2$. In particular, at this point
	$$r_\infty'(\theta_0)>0\quad\textnormal{and}\quad \frac{r_\infty'(\theta_0)}{r_\infty(\theta_0)}=\tan(\theta_0)<2\tan(\theta_0).$$
	Thus, there exists a neighborhood of $\theta_0$ where \eqref{thing49} holds. 
	
	We now check (\ref{region}). At $\theta=\theta_0$, 
	\begin{align*}
		x_\infty'&=r_\infty\cos\theta\left(\frac{r_\infty'}{r_\infty}-\tan\theta\right)=0\\
		x_\infty''&=\frac{\cos\theta}{r_\infty}\left(-2r_\infty r_\infty'\tan\theta+r_\infty r_\infty''-r_\infty^2\right)\\
		&=\frac{\cos\theta}{r_\infty}\left(-2r_\infty'^2+r_\infty r_\infty''-r_\infty^2\right)>0,
	\end{align*}
where last inequality follows from (\ref{curvature}). Hence, $x_\infty$ achieves local minimum at $\theta=\theta_0$. We choose $a$ slightly greater than $1$ such that $x_\infty(\theta_{\min})=x_\infty(\theta_{\max})=a$. This demonstrates the assumptions of Proposition \ref{subsolution}.
\end{proof}

We are now ready to complete the proof of Theorem \ref{main3}. Consider the  classes $[\omega]$ and  $[\alpha]$ discussed in the above lemma. Let $f_\infty(x)$ denote the graphical portion of $\gamma_\infty$ that connects $(1,q)$ to $(a,p)$. Since the assumptions of Proposition \ref{subsolution} are satisfied, there exists a subsolution $\gamma_t$  pushing out towards $\gamma_\infty$. In the proof of Proposition \ref{subsolution} we saw the subsolution condition is not satisfied unless $b$ is sufficiently large, and so the subsolution starts at some time $t_0$, with $\gamma_{t_0}$  already pushed out   towards $\gamma_\infty$.

Consider a function $f_{t_0}$ satisfying $f_{t_0}(1)=q$, and $f_{t_0}(a)=p$, which lies above the curve $\gamma_{t_0}$, but below $\gamma_\infty$, as in Figure 4. This function defines an initial representative $\alpha_0\in[\alpha]$, and its angle   is given by
$$\Theta(\alpha_0)=(n-1){\rm arctan}\left(\frac{f_{t_0}}{x}\right)+{\rm arctan}(f_{t_0}').$$
The supercritical phase assumption in Theorem \ref{main2} is satisfied if we choose $q$ large enough so that ${\rm arctan}\left(\frac{f_{t_0}}{x}\right)$ is sufficiently  close to  $\pi/2$. Thus if we consider a solution $\alpha_t$ to the line bundle mean curvature flow starting at $\alpha_0$, the flow exists for all time. Let $f_t$ be graph corresponding to $\alpha_t$. By the maximum principle, $f_t$ must stay below $f_\infty$ and above $\gamma_t$ for all time.

\begin{figure}[h!]
  \includegraphics[scale=0.5]{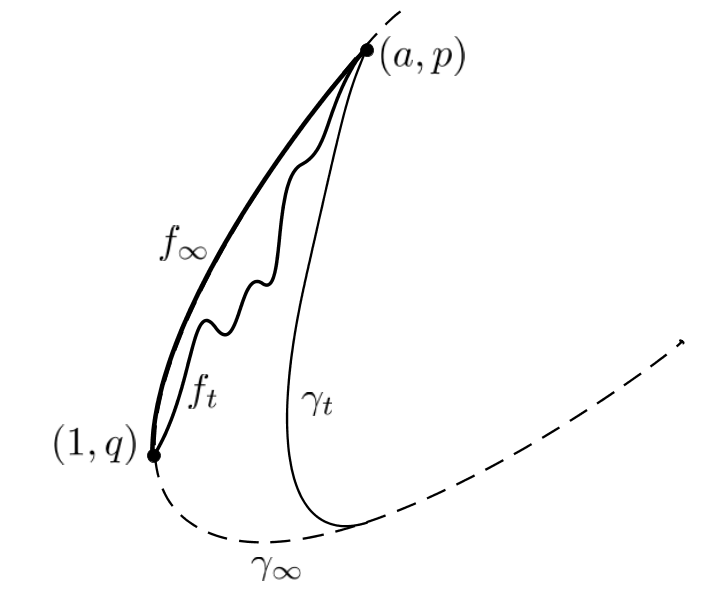}
 
  \caption{A singularity at $(1,q)$ at $t=\infty$.}
\end{figure}

 Because the subsolution $\gamma_t$ sweeps out to $\gamma_\infty$ as $t\rightarrow\infty$, the solution to the flow $f_t$ must converge to $f_\infty$ in $C^0$. In particular, it can not develop an infinite time singularity at any point other than $(1,q)$, where it will achieve vertical tangency. By construction, this point corresponds to the exceptional divisor $E$, which is precisely the destabilizing subvariety. Thus, the corresponding  forms  $\alpha_t$ along the line bundle mean curvature flow will blow up along $E$ at infinite time.

\end{document}